\documentclass[11pt, reqno]{amsart}

\usepackage[top=1in,bottom=1in,left=1.2in,right=1.2in]{geometry}
\usepackage{amsmath,amssymb,amsthm,amsfonts,enumerate, mathrsfs, mathtools, bm}
\usepackage{appendix}
\usepackage{relsize}
\usepackage{stmaryrd}
\usepackage{esint}
\usepackage{xcolor}
\usepackage{graphicx}
\usepackage{natbib}
\usepackage{makecell,float}
\usepackage{multirow}

\usepackage[
colorlinks=true,
linkcolor=blue,
filecolor=blue,
urlcolor=red,
citecolor=magenta]{hyperref}
\usepackage[nameinlink]{cleveref}

\newcounter{counterConstant} 
\newcommand{\const}[1]{
	\addtocounter{counterConstant}{1}
	\edef#1{\arabic{counterConstant}}
}

\usepackage[normalem]{ulem}

\numberwithin{equation}{section}

\newtheorem{theorem}{Theorem}[section]
\newtheorem{lemma}[theorem]{Lemma}
\newtheorem{remark}[theorem]{Remark}
\newtheorem{definition}[theorem]{Definition}
\newtheorem{proposition}[theorem]{Proposition}

\newtheorem{assumption}{Assumption}

\def\cB{{\mathcal B}}

\def\cX{{\mathcal X}}

\def\mN{{\mathbb N}}

\def\mR{{\mathbb R}}

\def\mZ{{\mathbb Z}}

\def\sF{{\mathscr F}}

\def\sS{{\mathscr S}}

\def\geq{\geqslant}
\def\leq{\leqslant}

\def\1{{\mathbf{1}}}

\def\d{\text{\rm{d}}}
\def\e{\mathrm{e}}
\def\eps{\varepsilon}

\usepackage{fancyhdr}
{} 

\allowdisplaybreaks

\begin{document}
	
\title{Stein's Method for Convergence Rates of Invariant Measures in the Nonlocal-to-Local Limit}

\author{Mingyan Wu and Guohuan Zhao}  

\address{School of Mathematical Sciences, Xiamen University, Xiamen, Fujian 361005, P. R. China}
\email{mingyanwu.math@xmu.edu.cn; mingyanwu.math@gmail.com}

\address{ State Key Laboratory of Mathematical Sciences, Academy of Mathematics and Systems Science, CAS, Beijing, 100190, P. R. China}
\email{gzhao@amss.ac.cn}

\thanks{The research of Mingyan Wu is supported by Innovative Youth Science Fund Project of Fujian Provincial Natural Science Foundation (Grant No. 2026J008016) and National Natural Science Foundation of China grants (No. 12201227). The research of Guohuan Zhao is supported by the National Key Research and Development Program of China (No. 2024YFA1013503) and the National Natural Science Foundation of China grants (No. 12271352).}
    
\begin{abstract}
We utilize Stein's method to establish quantitative bounds on the total variation distance between the invariant measure of a drifted nonlocal Markov operator and that of its local counterpart under minimal assumptions on the drifts. The main ingredient is a reduction via Stein's method that transforms the original problem into analyzing growth estimates for solutions to a nonlocal Poisson equation and decay estimates for the invariant measure of the local operator. 
\end{abstract}

\maketitle

\noindent \textbf{Keywords:} Stein's Method, Nonlocal Operators, Invariant Measures, Poisson Equations

\noindent  {\bf AMS 2020 Mathematics Subject Classification:} 35R09, 47G30, 60H10 

\tableofcontents

\section{Introduction} 

The asymptotic recovery of classical differential operators from their nonlocal counterparts is a fundamental theme in analysis. A celebrated result in this direction is the Bourgain-Brezis-Mironescu (BBM) formula \cite{BBM2001another}, which identifies the $O(1-s)$ rate at which fractional Sobolev norms recover classical ones as $s \uparrow 1$. This convergence of functional spaces is closely reflected at the operator level by the fractional Laplacian $-(-\Delta)^{\alpha/2}$ with the correspondence $s = \alpha/2$.  Extensive theoretical and numerical research has demonstrated that solutions to fractional systems consistently recover their local counterparts as $\alpha \to 2$, see for instance \cite{BHS2018Poisson, FJS2019Fractional, Jarrin2024From, HO2014Numerical}

Beyond the functional-analytic framework, this transition carries a deep probabilistic interpretation. Since the fractional Laplacian $-(-\Delta)^{\alpha/2}$ serves as the generator for an $\alpha$-stable process, the analytical limit $\alpha \uparrow 2$ corresponds precisely to the convergence of jump-type noise toward Gaussian diffusion. This naturally raises the question of whether the long-term statistical behavior of these processes--characterized by their invariant measures--inherits a similar asymptotic profile. While the convergence rate of these probabilistic objects cannot be directly deduced from the BBM formula, the $O(1-s)$ rate (with $s = \alpha/2$) serves as a heuristic, suggesting a corresponding $O(2-\alpha)$ scaling. 

Very recently, Deng, Schilling and Xu \cite{DSX2025optimal} considered the following setting: let $b: \mR^d \to \mR^d$ be a locally bounded vector field, and define the infinitesimal generators
\begin{equation*}
    L=\Delta+b\cdot\nabla \quad \text{and} \quad L_{\alpha} = -(-\Delta)^{\frac{\alpha}{2}}+b\cdot\nabla, \quad \alpha\in (0,2),
\end{equation*}
whose associated stochastic processes are given by the following SDEs, respectively:  
\begin{equation*}
    \d X_t = b(X_t) \d t + \sqrt{2} \d W_t, \quad \d X_t^\alpha = b(X_t^\alpha) \d t + \d Z_t^\alpha,
\end{equation*}
where $W_t$ is a standard Brownian motion and $Z_t^\alpha$ is a symmetric $\alpha$-stable process. Under suitable dissipativity conditions, both dynamics admit unique invariant probability measures, denoted by $\pi$ and $\pi_\alpha$, respectively. The authors established that $\pi_\alpha$ converges to $\pi$ at a rate of $O(2-\alpha)$. Complementing this, more recently, Liu, Ren and the first named author of this manuscript investigated the non-Gaussian to Gaussian transition from a more refined heat kernel perspective in \cite{LRW2026non}. They derived pointwise estimates for the difference between the heat kernels of $L_\alpha$ and $L$, showing how the heavy-tailed stable density transforms into the Gaussian density as $\alpha \uparrow 2$. Their work also confirmed the $O(2-\alpha)$ convergence rate for the corresponding invariant measures; further related results can be found in the references therein.

It should be emphasized that the existence and uniqueness of invariant measures can be established under mild conditions, requiring neither regularity assumptions nor growth constraints on the drift $b$ (cf. \cite[Theorem 4.1.6]{BKRS2015fokker} and \cite[Section 3.4.4]{Kul2018ergodic}).
However, quantifying the convergence rates of these measures often necessitates stronger technical assumptions. For instance, the approach in \cite{DSX2025optimal} relies on Malliavin calculus, which requires the drift field to be sufficiently smooth (up to the third-order derivatives). \cite{LRW2026non} further relaxed these requirements via a parametrix-based heat kernel method, establishing convergence under Hölder continuous drifts with dissipative linear growth by exploiting the classical parametrix construction of fundamental solutions pioneered by Levi \cite{Levi1907Sulle} and recently further developed in \cite{Menozzi2021Density, Menozzi2022Heat}. 

In contrast to the above approaches, which all impose regularity and linear growth conditions on the drift field $b$, the main purpose of this note is to study the convergence rates of invariant measures for nonlocal operators $L_\alpha$ to their local counterparts $L$ in total variation distance, under \textit{minimal} assumptions on $b$. Specifically, we do not require any regularity of the drift $b$ and allow it to have exponential-type growth (see Theorem \ref{thm:main}). 

\subsection{Assumption and Main Result}

\const{\Kd}
\const{\Kg}

For $\alpha\in (0,2)$, recall that the fractional Laplacian is given by 
\begin{equation*}
    (-\Delta)^{\frac{\alpha}{2}} f(x) = c_{d}(\alpha) \mathrm{P.V.} \int_{\mR^d} \frac{\left(f(x)-f(x-y)\right)}{|y|^{d+\alpha}} \, \d y, \quad f\in C_b^2(\mR^d), 
\end{equation*}
where  
\[
c_{d}(\alpha):= \pi^{-d/2} 2^\alpha \frac{\Gamma((d+\alpha)/2)}{|\Gamma(-\alpha/2)|}.
\]
The fractional Laplacian can also be defined via Fourier transform with the multiplier $|\xi|^{\alpha}$, and the two definitions are consistent as 
\begin{equation} \label{eq:frac_lap}
    c_{d}(\alpha) \mathop{\mathrm{P.V.}} \int_{\mR^d} (1 - \e^{-iy\cdot\xi})  |y|^{-d-\alpha} \, \d y = |\xi|^\alpha,
\end{equation}
where $\mathop{\mathrm{P.V.}}$ denotes the Cauchy principal value.

Our assumption on $b$ is as follows:  
\begin{assumption}[Dissipativity and Growth]\label{aspt}
Let $b: \mR^d \to \mR^d, (d \geq 1)$, be a locally bounded vector field. For some strictly positive constants $\kappa, \theta, K_{\Kd}, K_{\Kg}>0$, it holds that 
\begin{equation}\label{eq:dispt}
    b(x) \cdot x \leq -\kappa |x|^{\theta} + K_{\Kd}, \quad x\in \mR^d
\end{equation}
and 
\begin{equation}\label{eq:growth}
    |b(x)| \leq K_{\Kg}\exp\left(\frac{\kappa}{100 d \theta}|x|^\theta\right), \quad x\in \mR^d. 
\end{equation}
\end{assumption}

For simplicity of notation, we introduce the parameter set
\[
\Theta := (d, \kappa, \theta, K_{\Kd}, K_{\Kg}).
\]
It is well known that dissipative condition \eqref{eq:dispt} and local boundedness ensure the existence and uniqueness of the invariant measures $\pi$ and $\pi_\alpha$ for the operators $L$ and $L_\alpha$, respectively (see, e.g., \cite[Theorem 4.1.6]{BKRS2015fokker} and \cite[Section 3.4.4]{Kul2018ergodic}). Furthermore, we have the following main result.

\begin{theorem}\label{thm:main}
    Under Assumption \ref{aspt}, it holds that 
    \begin{equation*}
        \|\pi-\pi_\alpha\|_{\mathrm{TV}} \leq C (2-\alpha), 
    \end{equation*}
    where the constant $C>0$ depends only on $\Theta$, and $\| \cdot \|_{\mathrm{TV}}$ defined by
\[
\|\pi-\pi_\alpha\|_{\mathrm{TV}}:= \sup_{f \in \cB_b;\|f\|_{L^\infty}\leq 1} \langle f, \pi-\pi_\alpha \rangle,
\]
denotes the total variation distance.
\end{theorem}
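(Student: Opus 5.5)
Following the abstract, the plan is to compare the two invariant measures through the Stein equation attached to the nonlocal operator. Fix $f\in\cB_b$ with $\|f\|_{L^\infty}\leq 1$ and solve the nonlocal Poisson equation
\[
L_\alpha u_\alpha = f-\langle f,\pi_\alpha\rangle .
\]
Since $\pi$ is invariant for $L$, we have $\langle Lu_\alpha,\pi\rangle=0$. Integrating the equation against $\pi$ and subtracting this identity gives
\[
\langle f,\pi-\pi_\alpha\rangle=\langle (L_\alpha-L)u_\alpha,\pi\rangle=-\big\langle \big((-\Delta)^{\alpha/2}+\Delta\big)u_\alpha,\pi\big\rangle .
\]
The drift cancels exactly, so $b$ enters only through the growth of $u_\alpha$ and the decay of $\pi$. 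To justify the steps I would approximate by cutoffs, or work with the Lyapunov function $V(x)=\exp(\lambda|x|^\theta)$ directly.

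\textbf{Step 1: operator difference estimate.} I will show
\[
\big|((-\Delta)^{\alpha/2}+\Delta)u(x)\big|\lesssim (2-\alpha)\Big(\sup_{B(x,1)}|\nabla^2 u|+\sup_{|y|\geq1}\frac{|u(x+y)|}{(1+|y|)^{\gamma}}+|u(x)|\Big),
\]
for some $\gamma<\alpha$, uniformly for $\alpha\in(1,2)$; for $\alpha\leq 1$ the theorem is trivial, since $\|\pi-\pi_\alpha\|_{\mathrm{TV}}\leq 2$.

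The proof of Step 1 splits the singular integral into $|y|<1$ and $|y|\geq1$. For small jumps, a second order Taylor expansion gives
\[
c_d(\alpha)\int_{|y|<1}|y|^{2-d-\alpha}\,\d y=\frac{c_d(\alpha)\,\omega_{d-1}}{2-\alpha}.
\]
Combined with \eqref{eq:frac_lap}, this differs from the Laplacian normalization by $O(2-\alpha)$. The third derivatives needed to get a genuine $O(2-\alpha)$ error are not available, so for this I would replace Taylor by a Fourier/heat-semigroup representation.

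For large jumps, $c_d(\alpha)\sim (2-\alpha)$ because $|\Gamma(-\alpha/2)|\sim 2/(2-\alpha)$. This tail is therefore $O(2-\alpha)$, provided $u$ grows at most polynomially, or like $V$, and we pair it with the decay of $\pi$ through a convolution-type estimate.

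\textbf{Step 2: Poisson equation estimates.} I need bounds for $u_\alpha$ that are uniform in $\alpha$: growth $|u_\alpha(x)|+|\nabla^2u_\alpha|_{B(x,1)}\lesssim V(x)^{\epsilon}$, or polynomial growth. The intended route is a probabilistic representation,
\[
u_\alpha=-\int_0^\infty (P^\alpha_t f-\langle f,\pi_\alpha\rangle)\,\d t,
\]
with uniform exponential or subexponential ergodicity obtained from a Lyapunov function for $L_\alpha$. For the stable process only polynomial Lyapunov functions $|x|^{p}$ with $p<\alpha$ are available, and the dissipativity \eqref{eq:dispt} gives $L_\alpha |x|^p\leq -c|x|^{p-2+\theta}+C$. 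This yields $|u_\alpha(x)|\lesssim 1+|x|^{p}$.

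Second derivatives must come from interior Schauder or $L^p$-type estimates for $\Delta^{\alpha/2}$ with drift on unit balls. The rough, locally bounded drift is treated as a source term: $|b\cdot\nabla u|\lesssim |b|\,|\nabla u|$. Uniform-in-$\alpha$ regularity estimates for the fractional Laplacian as $\alpha\uparrow2$ (Caffarelli–Silvestre style) then give $|\nabla^2 u_\alpha|_{B(x,1)}\lesssim (1+|b|_{B(x,2)})^{k}(1+|x|^p)$. Since $b$ is merely bounded, $C^{1,\beta}$ is the natural level, and I would aim for the estimate in the mollified or Fourier-based form used in Step 1.

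\textbf{Step 3: decay of $\pi$.} I will show $\int V\,\d\pi<\infty$ with $V=\exp(\frac{\kappa}{2\theta}|x|^\theta)$. This is a standard Lyapunov computation,
\[
LV\leq V\big(\kappa|x|^{\theta-2}(d+\theta-2)\cdots-\kappa^2|x|^{2\theta-2}/(2\cdot)\big)+C ,
\]
giving $\int |x|^{2\theta-2}V\,\d\pi<\infty$. The coefficient $\frac{1}{100d\theta}$ in \eqref{eq:growth} is exactly what lets a power $|b|^k\lesssim V^{1/2}$ be absorbed.

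\textbf{Assembly and main obstacle.} Combining Steps 1–3,
\[
|\langle f,\pi-\pi_\alpha\rangle|\lesssim(2-\alpha)\int (1+|b|)^k(1+|x|^p)\,\d\pi\leq C(2-\alpha),
\]
and taking the supremum over $f$ proves the theorem. The main obstacle is Step 2: obtaining second-order (or $C^{2}$-like) local bounds on $u_\alpha$ uniformly as $\alpha\uparrow 2$ with an irregular drift, with only polynomial control of the exponentially growing coefficient. I would first try to absorb the drift by freezing it at the center of each unit ball and working on short time scales, that is, a localized Duhamel formula.
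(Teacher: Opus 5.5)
Your starting point matches the paper: the Stein equation $L_\alpha u=f-\langle f,\pi_\alpha\rangle$, the identity $\langle f,\pi-\pi_\alpha\rangle=\langle (L_\alpha-L)u,\pi\rangle$, the growth bound $|u|\lesssim 1+|x|^{p}$ from a polynomial Lyapunov function, and exponential moments of $\pi$. But Steps~1--2 contain a genuine gap, and what you call the ``main obstacle'' is in fact fatal to this route.

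Step~1 is false as stated. Since $|\xi|^2-|\xi|^\alpha=|\xi|^2(1-|\xi|^{\alpha-2})\approx(2-\alpha)|\xi|^2\log|\xi|$, the difference costs strictly more than two derivatives. For example, $u=N^{-2}\cos(Nx_1)$ has $|\nabla^2u|\leq1$ and negligible tail terms, yet $((-\Delta)^{\alpha/2}+\Delta)u=(N^{\alpha-2}-1)\cos(Nx_1)$ is of size $1$, not $O(2-\alpha)$, as $N\to\infty$. So you would need $u\in C^{2+\eps}$ locally, and no Fourier or heat-semigroup rewriting changes that derivative count.

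Step~2 cannot supply this, nor even $C^2$. With $f$ only bounded, $(-\Delta)^{\alpha/2}u=b\cdot\nabla u-g$ has merely bounded right-hand side. Uniformly over $\|f\|_{L^\infty}\leq1$, the solution is therefore no better than $C^{\alpha}$ (Zygmund class) with $\alpha<2$. The obstruction comes from $f$, not $b$, so freezing the drift does not help. Consequently $(L_\alpha-L)u$ is not even pointwise bounded uniformly in such $f$. Any argument that estimates it pointwise and then integrates against moments of $\pi$ must fail.

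The missing idea is to use the \emph{regularity} of $\pi$, which your Step~3 never touches, and split the $2+\eps$ derivatives between $u$ and $\pi$ by duality. The paper introduces a self-adjoint order-$2/3$ operator $T$: a cutoff of the kernel of $(I-\Delta)^{1/3}$, with symbol bounded below by $1/2$ and kernel decaying like $\e^{-|x|^{2\theta}}$. It then writes
\[
\langle f,\pi-\pi_\alpha\rangle=\int_\alpha^2\langle A_su,\,T\pi\rangle\,\d s,\qquad A_s=\log(\sqrt{-\Delta})(-\Delta)^{s/2}T^{-1}.
\]
Here $2-\alpha$ is simply the length of the $s$-interval. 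The logarithm is absorbed in the kernel bound $|K_{A_s}(y)|\lesssim(|y|^{-d-\beta}\wedge|y|^{-d-s})(1+|\log|y||)$ with $\beta=s-2/3\leq4/3$.

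On the $u$ side, this requires only $u\in C^{3/2}$ locally. Schauder estimates give that uniformly for $\alpha\geq7/4$, with constant $\lesssim(1+\|b\|_{L^\infty(B_2(x))})^{7/3}(1+|x|^{p_0})$, which is absorbed via \eqref{eq:growth}. On the $\pi$ side, $\|\pi\|_{C^{3/4}(B_1(x))}\lesssim\exp(-\kappa|x|^\theta/(6d\theta))$. This comes from $W^{1,4d}$-regularity of a weighted density $\e^{W}\pi$, using $b\in L^{d+\eps}(\pi)$, and it yields the decay of $T\pi$.

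Without some such transfer of derivatives onto $\pi$, your assembly step has nothing finite to integrate.
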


\begin{remark}\label{rmk:main} 
\begin{enumerate}[(i)]
    \item Due to\cite[Proposition 5.1]{DSX2025optimal}, 
    $2-\alpha$ is the optimal rate in the sense of total variation distance.
    \item The dissipative condition \eqref{eq:dispt} ensures the existence, uniqueness, and tail control of the invariant measure $\pi$ and the growth control of the solution to the Poisson equation \eqref{eq:poisson} (see Lemma \ref{lem:int} and Proposition \ref{prop:poisson} (a) below). We note that a stronger requirement, $\theta \geq 2$, was assumed in both \cite{DSX2025optimal} and \cite{LRW2026non}. 
    \item The growth condition \eqref{eq:growth} will be used to control the tail of the nonlocal regularity of $\pi$ and the growth of the nonlocal derivative of the solution to the Poisson equation \eqref{eq:poisson} (see Propositions \ref{prop:IM} and \ref{prop:poisson}(b)). Compared to the linear growth condition required in \cite{DSX2025optimal} and \cite{LRW2026non}, the assumption \eqref{eq:growth} considered here is more general. 
    \item While this paper focuses on the total variation distance for simplicity, the underlying approach is applicable to other weighted variation distances. Furthermore, the method can be extended to more complex models, such as equations with non-constant diffusion coefficients or driven by more general stable processes.
\end{enumerate}
\end{remark}

\subsection{Methodology}\label{sec:MC}

The core of our approach lies in a novel representation of the duality pairing $\langle f, \pi - \pi_\alpha \rangle$ through the lens of a specific Poisson equation and a regularizing Fourier multiplier. 

\medskip\noindent
\paragraph{\em Step 1: Apply Stein's equation} Our estimates rely on the generator approach to Stein's method, originally pioneered by Barbour \cite{Barbour1990stein} (see also \cite{Fang2019Multivariate}). Precisely, we consider the Poisson equation (or Stein's equation):
\begin{equation}\label{eq:poisson}
    L_\alpha u = f-\langle f, \pi_\alpha \rangle, \quad f \in L^\infty(\mathbb{R}^d).
\end{equation}
The key observation for proving Theorem \ref{thm:main} is the following identity, obtained by a formal derivation:
\begin{equation*}
    \begin{aligned}
        \langle f, \pi-\pi_\alpha \rangle = \langle L_\alpha u, \pi\rangle = \langle (L_\alpha-L)u, \pi \rangle =\langle [-\Delta-(-\Delta)^{\frac{\alpha}{2}}]u, \pi \rangle. 
    \end{aligned}
\end{equation*}

However, to utilize this equation, we need to establish global nonlocal regularity estimates for the solution $u$ to the Poisson equation, thereby ensuring its integrability with respect to $\pi$. However, without regularity assumptions on the drift $b$, $u$ is at best in $C^\alpha_{\rm loc}(\mR^d)$, which is insufficient to control the terms on the left-hand side of the above equation. 

\medskip\noindent
\paragraph{\em Step 2: Construct $T,T^{-1}$} To overcome the technical difficulty stated in {\em Step 1}, we introduce an invertible Fourier multiplier operator $T$, similar to $(I-\Delta)^{1/3}$ with a kernel exhibiting rapid decay, specifically faster than $\e^{-|x|^{2\theta}}$ as $|x| \to \infty$, and rewrite the above identity as 
\[
\langle f, \pi-\pi_\alpha \rangle=\langle [-\Delta-(-\Delta)^{\frac{\alpha}{2}}]T^{-1}u, T\pi \rangle. 
\]
The construction of the operator $T$, presented in Section \ref{sec:T_A}, is specifically designed to serve a dual purpose: to ensure that $T^{-1}u$ possesses regularity exceeding the second order, while simultaneously maintaining the strong decay of $T\pi(x)$ at infinity. It is worth noting that a standard choice such as the Bessel potential $T = (I-\Delta)^{1/3}$ would fail to guarantee the latter property. However, we can obtain the desired $T$ through cutoff and scaling of the kernel of $(I-\Delta)^{1/3}$; see Section \ref{sec:T} for details.

\medskip\noindent
\paragraph{\em Step 3: Represent $-\Delta - (-\Delta)^{\alpha/2}$}
To intuitively see why the factor $2-\alpha$ appears, we formally differentiate \eqref{eq:frac_lap} with respect to $\alpha$, which yields  
\begin{equation}\label{eq:diff_frac}
    \begin{aligned}
        \frac{\d}{\d s} |\xi|^s & =|\xi|^s \log|\xi| \\
        =\int_{\mR^d} (1 - \cos(y \cdot \xi))  \Big( c'_{d}(s)  & |y|^{-d-s} - c_{d}(s) |y|^{-d-s} \log|y| \Big) \d y.
    \end{aligned}
\end{equation}
Let $\log(\sqrt{-\Delta})(-\Delta)^{\frac{s}{2}}$ be the Fourier multiplier operator with symbol $|\xi|^s \log|\xi|$. Then its integral kernel is given by     
\[
K_{\log(\sqrt{-\Delta})(-\Delta)^{\frac{s}{2}}} =c'_{d}(s) |y|^{-d-s} - c_{d}(s) |y|^{-d-s} \log|y|, \quad s\in (0,2).
\]
From \eqref{eq:diff_frac}, we have  
\[
-\Delta v -(-\Delta)^{\frac{\alpha}{2}} v = \int_\alpha^2 \log(\sqrt{-\Delta})(-\Delta)^{\frac{s}{2}} v \, \d s 
\]
for function $v$ in some appropriate class. This gives us the following key identity: 
\begin{equation}\label{eq:key}
    \langle f, \pi-\pi_\alpha \rangle=\int_{\alpha}^{2}  \left\langle A_s u,  T \pi \right\rangle \, \d s, \quad A_s = \log(\sqrt{-\Delta})  (-\Delta)^{\frac{s}{2}} T^{-1}. 
\end{equation}

We remark that the investigate of $\log$-Laplacian is a very active topic in the analysis of PDEs, and has many applications in fluid mechanics, dispersive equations, jump Markov processes, etc, see for instance \cite{CW2019dirichlet, HZ2025nonlocal}.

\medskip\noindent
\paragraph{\em Step 4: Estimate}
The final step in our analysis consists of proving the uniform boundedness of the integral term in $s$. To this end, we need to establish the growth estimates for $A_s u$ and the decay estimates for $T\pi$, which are given in Proposition \ref{prop:poisson} and Proposition \ref{prop:IM} below, respectively.

\begin{table}[H]
\centering
\begin{tabular}{c|c|c}
\hline
\textbf{Conditions on $b$} & \textbf{Properties} & \textbf{Conclusions} \\
\hline
\multirow{2}{*}{Dissipativity \eqref{eq:dispt}} 
    & \makecell[c]{Tail control of $\pi$} 
    & Lemma \ref{lem:int} \\
\cline{2-3}
    & \makecell[c]{Growth control of $u$} 
    & Proposition \ref{prop:poisson}(a) \\
\hline
\multirow{2}{*}{Growth \eqref{eq:growth}} 
    & \makecell[c]{Tail control of $T \pi$} 
    & Propositions \ref{prop:IM} \\
\cline{2-3}
    & \makecell[c]{Growth control of $A_s u$} 
    & Proposition \ref{prop:poisson}(b) \\
\hline
\end{tabular}
\end{table}

To the best of our knowledge, this approach is novel in the literature. Our result shows that this methodology yields quantitative bounds on the total variation distance between $\pi$ and $\pi_\alpha$ under very weak assumptions on the drift field $b$. Furthermore, this framework is readily extensible to a broader class of models, such as equations with non-constant diffusion coefficients or driven by more general stable processes. 

\medskip
\paragraph{\bf Notations and Conventions}
We close this section by introducing some notation and conventions. Let $\mathbb{R}^d$ denote the $d$-dimensional Euclidean space. For $x \in \mathbb{R}^d$ and $r > 0$, we denote by $B_r(x)$ the open ball of radius $r$ centered at $x$, and $B_r := B_r(0)$. For two positive quantities $A$ and $B$, the notation $A \gg B$ means that $A$ is much larger than $B$. The notation $C$ denotes a generic positive constant whose value may change from line to line. We also use the following standard notations.

\begin{itemize}
    \item $\mathscr{S}(\mathbb{R}^d)$: the Schwartz space of rapidly decreasing smooth functions.
    \item $\mathscr{S}'(\mathbb{R}^d)$: the space of tempered distributions.
    \item We denote the Hölder space of exponent $\gamma$ on an open set $\Omega \subset \mathbb{R}^d$ as $C^{\gamma}(\Omega)$. Similarly, $C_{\rm loc}^{\gamma}(\Omega)$ denotes the space of functions that are locally Hölder continuous with exponent $\gamma$ on $\Omega$. 
    \item For any $\gamma \in \mR_+ \backslash \mN$, 
    \[
        [u]_{\gamma;\Omega}:=\sup_{x,y\in \Omega}\frac{\left|\nabla^{[\gamma]}f(x)-\nabla^{[\gamma]}f(y)\right|}{|x-y|^{\gamma-[\gamma]}}.
    \]
\end{itemize}

\section{Two Auxiliary Fourier Multiplier Operators}\label{sec:T_A}

In this section, we first construct an invertible Fourier multiplier operator $T$. While similar in structure to the Bessel potential $(\mathop{I}-\Delta)^{1/3}$, our operator $T$ is characterized by a kernel that decays faster than $\mathrm{e}^{-|x|^{a}}$ as $|x| \to \infty$ for some given $a > 0$. Secondly, we studied the operator $A_s$ given in \eqref{eq:key} and establish a crucial upper bound for its integral kernel in Proposition \ref{prop:A}. 

\begin{definition}
    Given a fixed measurable function $m: \mR^d \to \mR$. The corresponding multiplier operator $T_m$ with $m$ is given by
\[
T_m [f]:= \mathcal{F}^{-1}\bigl( m  \hat f  \bigr), \quad f \in \sS (\mathbb{R}^d),
\]
where $\hat f = \sF f$ denotes the Fourier transform of $f$, and $\sF^{-1}$ is the inverse Fourier transform. If $T_m$ is bounded on $L^p(\mathbb{R}^d)$, we call $m$ an $L^p$ Fourier multiplier.
\end{definition}

\subsection{Operators $T$}\label{sec:T}

First of all, let $K$ be the distributional kernel of the operator $(I-\Delta)^{1/3}$ (with the Fourier multiplier $(1+|\xi|^2)^{1/3}$). Then 
\[
(I-\Delta)^{1/3} f(x)= f(x) + \int_{\mR^d} (f(x)-f(y)) K(x-y) \d y, \quad f\in \sS(\mR^d),
\]
and 
\[
K(x)\sim C |x|^{-d-2/3}, ~ |x|\to 0 ~\text{ and }~
K(x)\sim C \e^{-|x|} , ~ |x|\to \infty 
\]
(see, e.g., \cite{Stein1970singular}). 

Next, we construct the desired operator $T$ by applying a cutoff and scaling procedure to the kernel of $(I-\Delta)^{1/3}$.

\begin{definition}[Operators $T$]
Let $a>0$ and  $M>0$. Put $\rho_M(x) = \rho (x/M)$, where $\rho$ is a smooth, radial, non-negative function such that $\rho |_{B_1}\equiv 1$, $0<\rho (x)\leq 1$ and $\rho (x) = \e^{-|x|^a}$ for $|x| \geq 2$. Define $K_T(x) = \rho_M(x)K(x)$ and the operator $T$ by 
\[
Tf(x)=f(x)+ \int_{\mR^d} (f(x)-f(y)) K_T(x-y) \d y,  \quad f\in \sS(\mR^d). 
\] 
\end{definition}

Writing $\rho_M(x) = 1 + (\rho_M(x)-1)$, we decompose the truncated kernel:
\begin{align}\label{eq:dec-K_T}
K_T(x) = K(x) + (\rho_M(x)-1)K(x) =: K(x) + R_M(x). 
\end{align}
Notice that $\rho_M(x) \equiv 1$ for $|x| \leq M$. Therefore, the remainder $R_M(x)$ vanishes identically on the ball $B_M(0)$. Moreover, $R_M(x)$ is a globally $C^\infty$-smooth function that decays exponentially. Consequently, $R_M \in L^1(\mathbb{R}^d)$ and its Fourier transform $\widehat{R}_M \in \sS' (\mR^d)$.

Taking the Fourier transform of the decomposition \eqref{eq:dec-K_T} for $K_T$ yields that the Fourier symbol of $T$ is given by
\begin{equation}\label{eq:mT}
    m_{T}(\xi) = (1+|\xi|^2)^{1/3} + \widehat{R}_M(0) - \widehat{R}_M(\xi). 
\end{equation}

The following lemma shows that $T$ is invertible and has a rapidly decaying kernel.

\begin{lemma}\label{lem:T}
There is a constant $M=M(d) \gg 1$ such that for any $a>0$ the operator $T$ given above satisfies the following properties: 
\begin{enumerate}[(a)]
    \item The Fourier symbol of $T$ satisfies 
    \[
    m_{T}(\xi) \geq \frac{1}{2} ~ \text{ and }~ m_{T}(\xi) \sim |\xi|^{\frac{2}{3}} ~\text{ as }~ |\xi| \to \infty. 
    \]
    \item The distributional integral kernel $K_T(x)$, satisfies 
    \[
    |K_T(x)|\leq C(d) |x|^{-d-\frac{2}{3}}, ~ x\in B_1 ~ \text{ and } ~ |K_T(x)| \leq C(d, a, M) \e^{-|x|^a}, ~ x\in B_1^c.
    \]
    \item The inverse operator $T^{-1}$ is well-defined and bounded on $L^2(\mR^d)$, with a strictly positive, $C^\infty_b$ symbol.
\end{enumerate}
\end{lemma}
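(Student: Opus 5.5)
The plan is to derive everything from the decomposition \eqref{eq:dec-K_T} and the symbol formula \eqref{eq:mT}, choosing $M$ large to make the remainder $R_M$ a small perturbation. The key quantity is
\[
\|R_M\|_{L^1}=\int_{|x|\geq M}|1-\rho_M(x)|\,|K(x)|\,\d x\leq \int_{|x|\geq M}|K(x)|\,\d x\leq C\int_{|x|\geq M}\e^{-|x|}\,\d x .
\]
Since $0<\rho\leq 1$, we have $|1-\rho_M|\leq 1$, so this bound is uniform in $a$. It tends to $0$ as $M\to\infty$, using the known asymptotics of the Bessel-type kernel $K$. We fix $M=M(d)$ so that $\|R_M\|_{L^1}\leq 1/4$.

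\textbf{Part (a).} Since $R_M\in L^1$ is real and even (both $\rho$ and $K$ are radial), $\widehat R_M$ is real, continuous, and satisfies $|\widehat R_M(\xi)|\leq\|R_M\|_{L^1}\leq 1/4$. Hence
\[
m_T(\xi)\geq (1+|\xi|^2)^{1/3}-2\|R_M\|_{L^1}\geq 1-\tfrac12=\tfrac12 .
\]
Moreover, $\widehat R_M(0)-\widehat R_M(\xi)$ is bounded, while $(1+|\xi|^2)^{1/3}\sim|\xi|^{2/3}$ as $|\xi|\to\infty$, which gives the asymptotics.

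\textbf{Part (b).} On $B_1$ we have $\rho_M\equiv1$ (as $M\geq1$), so $K_T=K$ there, and $|K(x)|\leq C|x|^{-d-2/3}$ by the small-$|x|$ asymptotics of $K$. For $1\leq|x|\leq 2M$, $K$ is bounded, so $|K_T|\leq C(d,M)\leq C(d,a,M)\e^{-|x|^a}$ on this compact annulus. For $|x|\geq 2M$,
\[
|K_T(x)|=\e^{-|x/M|^a}|K(x)|\leq C\e^{-M^{-a}|x|^a}\e^{-|x|}.
\]
The point to be careful about is that the $\rho$ factor only decays like $\e^{-M^{-a}|x|^a}$, not $\e^{-|x|^a}$. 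When $a\leq 1$ this is harmless, since $\e^{-|x|}\leq C(a)\e^{-|x|^a}$. When $a>1$ it is a genuine mismatch in the exponent constant, and I expect this to be the only real obstacle. The remedy I would use is to apply the construction with $\rho$ replaced by $\rho(\cdot)^{M^a}$, equivalently with tail profile $\e^{-M^a|x|^a}$. This yields exactly $\e^{-|x|^a}$ after scaling and leaves the argument for (a) unchanged, because that argument used only $0<\rho\leq1$ and $\rho_M\equiv1$ on $B_M$. Since only the growth rate $\e^{-c|x|^a}$ matters for later use, either reading suffices, with constants depending on $(d,a,M)$.

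\textbf{Part (c).} The bound $m_T\geq 1/2$ shows that $1/m_T$ is positive and bounded by $2$, so $T^{-1}:=T_{1/m_T}$ is bounded on $L^2$ by Plancherel, and it inverts $T$ on $\sS$. For smoothness of $1/m_T$, I would use that $(1+|\xi|^2)^{1/3}$ is $C^\infty$ with all derivatives bounded. I would also use that $\widehat R_M$ is $C^\infty$ with bounded derivatives, because $x^\beta R_M\in L^1$ for every multi-index $\beta$ thanks to the exponential decay of $R_M$. Then every derivative of $m_T$ is bounded, and since $m_T\geq1/2$, the quotient rule shows that all derivatives of $1/m_T$ are bounded polynomial expressions in $1/m_T$ and the derivatives of $m_T$. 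Hence $1/m_T\in C_b^\infty$.
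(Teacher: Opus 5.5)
Your proposal is correct and follows the paper's proof. As there, you fix $M=M(d)$ so that $\|R_M\|_{L^1}\le\int_{|x|\ge M}|K|\le 1/4$ uniformly in $a$, read off (a) and (c) from \eqref{eq:mT}, and obtain (b) from $\rho_M\equiv1$ on $B_M$ together with the $\e^{-|x|}$ tail of $K$.

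Your caution in (b) is well founded and goes beyond the paper. The paper's step $\e^{-|x/M|^a}|K(x)|\le C\e^{-|x|^a}$ is false for $a>1$, which is exactly the case $a=2\theta$ with $\theta>1/2$ in Proposition~\ref{prop:IM}. Your repair fixes this without touching (a) or (c): either use the tail profile $\e^{-M^a|x|^a}$, or simply accept the bound $\e^{-M^{-a}|x|^a}$. The latter is all Proposition~\ref{prop:IM} needs, since only the comparison of $|z|^{2\theta}$ with $|x|^\theta$ matters there.
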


\begin{proof}
\textit{(a) and (c):}  Recall \eqref{eq:mT}. Since $|\widehat{R}_M(\xi)|\leq \|R_M\|_{L^1}$ for all $\xi \in \mR^d$, the leading order behavior is completely governed by the multiplier, yielding $m_{T}(\xi) \sim |\xi|^\frac{2}{3}$ as $|\xi| \to \infty$. This proves (a).

Noting that, by the definition of $R_M$, 
$$
\|R_M\|_{L^1} \leq \int_{|x| \geq M} |K(x)| \, \d x, 
$$
since $K(x)$ is exponentially decaying at infinity, we can choose $M \gg 1$ sufficiently large such that $\int_{|x| \geq M} |K(x)| \d x \leq \frac{1}{4}$. Consequently, $|\widehat{R}_M(\xi)| \leq \frac{1}{4}$ for all $\xi \in \mR^d$. Because $(1+|\xi|^2)^{1/3} \geq 1$, we get 
$$
m_{T}(\xi) \geq 1 - \frac{1}{4} - \frac{1}{4} = \frac{1}{2} > 0, \quad  \xi \in \mR^d. 
$$
Thus, the inverse symbol 
$$
\frac{1}{m_{T}(\xi)}=\left[(1+|\xi|^2)^{1/3} + \widehat{R}_M(0) - \widehat{R}_M(\xi) \right]^{-1}
$$
is globally $C^\infty_b$-smooth and
\[
|1/m_T(\xi)|\leq 2 ~\textit{ and }~  1/m_T(\xi) \sim |\xi|^{-\frac{2}{3}},  ~ |\xi|\gg1. 
\]
The operator $T^{-1}$ is well-defined and bounded on $L^2(\mR^d)$, proving (c).

\medskip\noindent
\textit{(b):} For $|x| \geq 2M$, the cutoff function takes the exact form $\rho_M(x) = \e^{-|x/M|^a}$. The kernel $K(x)$ is smooth and bounded by an exponential decay $C \e^{-|x|}$ in this region. Therefore, 
$$ 
|K_T(x)| = \e^{-|x/M|^a} |K(x)| \leq C \e^{-|x|^a}, \quad |x| \gg 1, 
$$
verifying the required decay rate.
\end{proof}

\subsection{Operators $A_s$}\label{sec:A}

Motivated by the formal analysis in Section \ref{sec:MC} and the construction of $T$ given in Section \ref{sec:T}, we introduce the following definition.

\begin{definition}[Operators $A_s$]\label{def:As}
Given $s \in [7/4, 2]$. Let $A_s$ be the operator on $\sS(\mathbb{R}^d)$ associated with the multiplier 
\[
m_{A_s}:=|\xi|^s \log |\xi| / m_T(\xi), 
\] 
where $m_T$ is the multiplier of $T$ given by \eqref{eq:mT} with the same constant $M=M(d)$ as in Lemma \ref{lem:T}.
\end{definition}

\begin{proposition}\label{prop:A}
Let $s \in [7/4, 2]$. There exists a unique, real-valued, symmetric kernel $K_{A_s} \in C^\infty(\mathbb{R}^d \setminus \{0\})$ such that $A_s$ can be represented as the principal value singular integral:
\begin{equation*}
    \begin{aligned}
        A_sf(x) = \mathrm{P.V.}\int_{\mathbb{R}^d} (f(x-y) - f(x))K_{A_s}(y) \, \d y, \quad f\in \sS(\mR^d). 
    \end{aligned}
\end{equation*}
Furthermore, $K_{A_s}(y)$ satisfies 
    \begin{equation}\label{eq:Ks}
        |K_{A_s}(y)| \leq C \left[ |y|^{-d-\beta} \wedge |y|^{-d-s} \right] (1+|\log|y||), 
    \end{equation}
    where $\beta = s - 2/3 \in [13/12,4/3]$. 
\end{proposition}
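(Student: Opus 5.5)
The plan is to split the symbol $m_{A_s}(\xi)=|\xi|^s\log|\xi|/m_T(\xi)$ with a smooth radial cutoff $\chi$ ($\chi=1$ on $B_1$, supported in $B_2$) as $m_{A_s}=\chi m_{A_s}+(1-\chi)m_{A_s}=:m^{(0)}+m^{(1)}$, and to estimate the two inverse Fourier transforms separately. Since $m_{A_s}$ is real and even, $m_T$ being radial because $K$ and $\rho$ are radial, the kernel is real and symmetric. Since $m_{A_s}(0)=0$, we define $K_{A_s}:=-\mathcal{F}^{-1}m_{A_s}$ on $\mR^d\setminus\{0\}$, and the principal value representation follows from pairing with $f\in\sS$. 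Uniqueness holds because a kernel in $C^\infty(\mR^d\setminus\{0\})$ with the stated decay is determined by its action on test functions supported away from $0$. Smoothness away from the origin comes from the fact that $m_{A_s}$ is smooth away from $\xi=0$. In the symbol estimates below, derivatives of $1/m_T$ can be bounded because $\widehat R_M\in C^\infty$ with all derivatives bounded ($R_M$ decays like $\e^{-|x/M|^a}$ times $\e^{-|x|}$, so all its moments are finite), and $m_T\ge 1/2$ by Lemma \ref{lem:T}.

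\textbf{High frequencies: $|y|\le 1$.} For $|\xi|\ge1$, Lemma \ref{lem:T}(a) suggests $m^{(1)}$ behaves like a symbol of order $\beta=s-2/3$ times $\log$. Concretely, I would show
\[
|\partial^\gamma m^{(1)}(\xi)|\le C|\xi|^{\beta-|\gamma|}(1+\log|\xi|).
\]
This is a product-rule computation: $(1+|\xi|^2)^{1/3}$ is a classical symbol of order $2/3$, and $\partial^\gamma\widehat R_M$ is bounded, which is a lower-order perturbation. The standard Littlewood--Paley argument then gives the kernel bound $|\mathcal{F}^{-1}m^{(1)}(y)|\le C|y|^{-d-\beta}(1+|\log|y||)$ for $|y|\le 1$: decompose into dyadic shells $|\xi|\sim 2^j$, integrate by parts $N$ times on shells with $2^j|y|>1$, and sum. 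For $|y|\ge1$ we need rapid decay of $\mathcal{F}^{-1}m^{(1)}$. Here $\beta<d+\beta$, so integrating by parts enough times makes the symbol integrable, giving $|y|^{-N}$ decay. This is consistent with both sides of \eqref{eq:Ks}.

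\textbf{Low frequencies: $|y|\ge1$.} Near $\xi=0$, write $m^{(0)}(\xi)=|\xi|^s\log|\xi|\,g(\xi)$ with $g=\chi/m_T\in C_c^\infty$ (recall $m_T$ is smooth near $0$). The function $|\xi|^s\log|\xi|$ is homogeneous of degree $s$ up to a logarithm. Its inverse transform is exactly the kernel in \eqref{eq:diff_frac}, of size $|y|^{-d-s}(1+\log|y|)$. Multiplying by $g$ amounts to convolving with a Schwartz function, which preserves this tail. The one delicate point is $s=2$: there $c_d(s)\to0$ and $c_d'(s)$ produces the kernel $|y|^{-d-2}$, which is still fine. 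To handle this uniformly in $s\in[7/4,2]$, I would Taylor-expand $g(\xi)=g(0)+\nabla g(0)\cdot\xi+O(|\xi|^2)$ (the linear term vanishes by evenness). The remainder symbol $|\xi|^s\log|\xi|\,O(|\xi|^2)$ has more smoothness at $0$ and hence a faster decaying kernel. Alternatively, I would use dyadic integration by parts directly, giving $|\partial^\gamma m^{(0)}|\le C|\xi|^{s-|\gamma|}(1+|\log|\xi||)$. Summing the shells $2^j\lesssim|y|^{-1}$ then gives $C|y|^{-d-s}(1+\log|y|)$. For $|y|\le1$, $\mathcal{F}^{-1}m^{(0)}$ is bounded, which is dominated by the right-hand side of \eqref{eq:Ks}.

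\textbf{Main obstacle.} The step I expect to be hardest is obtaining constants uniform in $s\in[7/4,2]$ in the dyadic sums. The low-frequency sum $\sum_{2^j\le|y|^{-1}}2^{j(d+s)}(1+|j|)$ is fine. At high frequency, however, the order is $\beta\ge 13/12>1$. The $N$-fold integration by parts must use $N>d+\beta$, and the constants from the symbol bounds must depend only on $d$ and $M$. Everything else is routine symbol calculus.
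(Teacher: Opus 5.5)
Your plan follows the paper's proof. You use the same split $m_{A_s}=\chi m_{A_s}+(1-\chi)m_{A_s}$, a Littlewood--Paley decomposition of each piece, and $N$-fold integration by parts on dyadic shells. The sums are split at $2^{j}\sim|y|^{-1}$ for high frequencies and at $2^{-k}\sim|y|^{-1}$ for low frequencies. This is exactly the content of Lemma \ref{lem:dyadic}(a),(b), and your uniformity-in-$s$ bookkeeping matches it.

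\textbf{Step to repair: the symbol estimate for $1/m_T$.} Bounded derivatives of $\widehat R_M$ (finite moments of $R_M$) do not make $\widehat R_M$ a lower-order perturbation. If $\partial^\gamma\widehat R_M$ were merely $O(1)$, then $\partial_{\xi_1}(1/m_T)=-\partial_{\xi_1}m_T/m_T^2$ would only be $O(|\xi|^{-4/3})$, not the needed $O(|\xi|^{-5/3})$. Higher derivatives would gain no further decay, so $|\partial^\gamma m^{(1)}(\xi)|\lesssim|\xi|^{\beta-|\gamma|}\log|\xi|$ would not follow.

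The fact you actually need is this: $R_M=(\rho_M-1)K$ vanishes on $B_M$, away from the singularity of $K$. Hence $R_M$ is $C^\infty$ with exponentially decaying derivatives, and so $\widehat R_M\in\sS$.

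\textbf{Smoothness of the kernel.} The same symbol estimates, taken to all orders, are what give $K_{A_s}\in C^\infty(\mR^d\setminus\{0\})$. Smoothness of $m_{A_s}$ away from $\xi=0$ alone does not.

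\textbf{Sign.} With the convention $(f(x-y)-f(x))K(y)$, the symbol of the operator is $\widehat K(\xi)-\widehat K(0)$. So $K_{A_s}=+\mathcal{F}^{-1}m_{A_s}$ off the origin, not $-\mathcal{F}^{-1}m_{A_s}$.
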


To establish Proposition \ref{prop:A}, we require the following lemma.

\begin{lemma}\label{lem:dyadic}
Let $h \in \sS'(\mR^d)$. 
\begin{enumerate}[(a)]
    \item Suppose that $h(\xi)$ is a real-valued function supported in $B_1^c$ and 
    \[
    |\nabla^k h(\xi)| \leq A |\xi|^{\beta-k} \log(1+|\xi|), \quad \exists \beta \in [13/12,4/3], ~ \forall 0\leq k\leq d+3. 
    \]
    Then there exists a kernel $H$ such that the Fourier multiplier operator $T_hf(x):=\sF^{-1}(h\widehat{f})(x)$ admits the integral representation 
    \begin{equation}\label{eq:def_H1}
        T_hf(x) = \mathrm{P.V.}\int_{\mR^d} \left(f(x-y)-f(x)\right) H(y) \d y, 
    \end{equation}
    and for any $y \in \mR^d$, it holds that 
    \begin{equation}\label{eq:H1}
        |H(y)| \leq C \left[ |y|^{-d-\beta}(1+|\log|y||) \wedge |y|^{-d-2} \right], 
    \end{equation}
    where $C$ depends only on $d$ and $A$. 
    \item Suppose that $h(\xi)$ is supported in $B_2$ and 
    \[
    |\nabla^k h(\xi)| \leq B |\xi|^{s - k} \log(1+|\xi|^{-1}), \quad \exists s\in [7/4,2],~ \forall 0\leq k\leq d+4. 
    \]
    Then there exists a kernel $H$ such that the Fourier multiplier operator $T_hf(x):=\sF^{-1}(h\widehat{f})(x)$ admits the integral representation 
    \[
    T_hf(x) = \int_{\mR^d} f(x-y) H(y) \d y, 
    \]
    and for any $y \in \mR^d$, 
    \begin{equation}\label{eq:H2}
        |H(y)| \leq C \left[ |y|^{-d-s} (1+|\log|y||) \wedge 1 \right], 
    \end{equation}
    where $C$  depends on $d$ and $B$. 
\end{enumerate}
\end{lemma}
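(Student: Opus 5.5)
We prove both parts with a Littlewood--Paley decomposition of the symbol. Each dyadic piece has a kernel we can estimate by integrating by parts, and we then sum over scales. Fix a radial $\phi\in C_c^\infty$ supported in $\{1/2\le|\xi|\le 2\}$ with $\sum_{j\in\mZ}\phi(2^{-j}\xi)=1$ for $\xi\neq0$. Set $h_j:=h\,\phi(2^{-j}\cdot)$ and $H_j:=\sF^{-1}h_j$. In part (a), $h$ vanishes on $B_1$, so only $j\ge -1$ contribute. In part (b), $h$ is supported in $B_2$, so only $j\le 1$ contribute.

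\emph{Dyadic bounds.} By the Leibniz rule and the hypotheses, $|\nabla^k h_j|\lesssim 2^{j(\beta-k)}\log(2+2^j)$ on the annulus $|\xi|\sim 2^j$ in case (a), and $|\nabla^k h_j|\lesssim 2^{j(s-k)}\log(2+2^{-j})$ in case (b). The annulus has volume $\sim 2^{jd}$. Integrating by parts $N$ times gives
\[
|H_j(y)|\lesssim 2^{j(d+\beta)}\log(2+2^{j})\,(1+2^j|y|)^{-N},\qquad 0\le N\le d+3,
\]
in case (a), and the analogous bound with $\beta$ replaced by $s$ and $\log(2+2^j)$ replaced by $\log(2+2^{-j})$ in case (b), valid for $N\le d+4$.

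\emph{Part (b).} We sum the pieces $j\le 1$. For $|y|\le 1$ we take $N=0$: the sum $\sum_{j\le1}2^{j(d+s)}\log(2+2^{-j})$ converges, so $|H|\lesssim 1$. For $|y|>1$ we split at $2^j\approx |y|^{-1}$. The terms with $2^j\le|y|^{-1}$, bounded with $N=0$, sum to $\lesssim |y|^{-d-s}\log(2+|y|)$. The terms with $2^j>|y|^{-1}$, bounded with $N=d+4>d+s$, give a geometric series dominated by its first term, so they also contribute $\lesssim |y|^{-d-s}\log|y|$. This proves \eqref{eq:H2}. In this case $H=\sum_j H_j$ converges absolutely, so $T_hf=H*f$ directly.

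\emph{Part (a), and the main obstacle.} The representation as a principal value integral of $f(x-y)-f(x)$, together with the smallness of $H$ near the origin and the identification of the distribution $\sF^{-1}h$ with a function away from $0$, is the delicate step. For $y\neq0$, define $H(y):=\sum_{j\ge-1}H_j(y)$. For $|y|\le1$, split at $2^j\approx|y|^{-1}$. The low frequencies ($N=0$) give $\lesssim|y|^{-d-\beta}\log(2+|y|^{-1})$. The high frequencies, with $N=d+3>d+\beta$, give the same bound. Hence $|H(y)|\lesssim|y|^{-d-\beta}(1+|\log|y||)$.

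For $|y|\ge1$ all relevant $j$ satisfy $2^j|y|\gtrsim 1$. Taking $N=d+3$, the sum is $\lesssim|y|^{-d-3}\sum_{j\ge-1} 2^{j(\beta-3)}\log(2+2^j)\lesssim |y|^{-d-3}\le|y|^{-d-2}$. Combining the two regimes gives \eqref{eq:H1}.

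It remains to justify \eqref{eq:def_H1}. Since $h$ vanishes near $0$, pairing with $\hat f$ gives $h(0)\hat f$-type terms equal to $0$, so formally $T_hf(x)=\int (f(x-y)-f(x))H(y)\,\d y$. To make this rigorous we check it for the finite partial sums $\sum_{j\le J}H_j$, each a Schwartz function with $\int H_j=h_j(0)=0$. For these, $\sum_{j\le J} H_j*f(x)=\int (f(x-y)-f(x))\sum_{j\le J}H_j(y)\,\d y$ holds exactly.

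We then let $J\to\infty$. Since $\beta\le 4/3<2$, we Taylor expand $f(x-y)-f(x)$ to second order. Using the uniform-in-$J$ dyadic bounds and the symmetry that kills the first-order term, we obtain $|y|^2|H|$ integrability near $0$ (and even plain integrability of $|y||H|$ suffices only if $\beta<1$, which fails). Hence we use the principal value: the odd part of $h$ contributes only after symmetrizing, and dominated convergence yields the limit.

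Finally, the left side converges to $T_hf$ in $\sS'$ and pointwise, because $h\hat f\in L^1$. This gives \eqref{eq:def_H1}.
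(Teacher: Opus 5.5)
Your proposal is essentially the paper's proof. It uses the same Littlewood--Paley pieces $H_j$, the same integration-by-parts bound $|H_j(y)|\lesssim(1+|j|)\,2^{j(d+\beta)}\min\{1,(2^j|y|)^{-N}\}$, and the same split of $\sum_j H_j$ at $2^j\approx|y|^{-1}$, with $N=0$ on one side and $N>d+\beta$ (resp.\ $N>d+s$) on the other.

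The one place you deviate is the justification of \eqref{eq:def_H1}. The paper Fourier-transforms the principal-value integral and sums $\int(\cos(y\cdot\xi)-1)H_j(y)\,\d y$ over $j$. You instead prove the identity exactly for the partial sums $\sum_{j\le J}H_j$, which are integrable with zero mean because $h_j(0)=0$ (they need not be Schwartz under only $d+3$ derivatives, but $|H_j(y)|\lesssim(1+|y|)^{-d-3}$ suffices). You then let $J\to\infty$ using a second-order Taylor expansion and the $J$-uniform dyadic bounds. This is a legitimate and slightly more explicit form of the same cancellation: it makes visible the interchange of sum and integral that the paper leaves implicit.

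One correction concerns your limit step. The first-order term $-\nabla f(x)\cdot\int_{\delta<|y|<1}y\,H(y)\,\d y$ vanishes only when $H$ is even, i.e.\ when $h$ is even. Real-valuedness alone gives only $H(-y)=\overline{H(y)}$. Your remark that the odd part of $h$ ``contributes only after symmetrizing'' cannot be made to work.

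For a counterexample, take $h=(1-\chi)\,\xi_1|\xi|^{\beta-1}$ with $\chi\in C_c^\infty(B_2)$ and $\chi\equiv1$ on $B_1$. It satisfies the hypotheses of (a). Its kernel is odd and, near $0$, equals $c\,i\,y_1|y|^{-d-\beta-1}$ (with $c\neq0$) plus a smooth function. Hence $\int_{\delta<|y|<1}y_1H(y)\,\d y$ grows like $\delta^{1-\beta}$, and the principal value in \eqref{eq:def_H1} diverges whenever $\partial_1 f(x)\neq0$.

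So you must add the hypothesis that $h$ is even. The paper relies on the same assumption tacitly, in its step through $\cos(y\cdot\xi)$ and $\mathrm{Re}\,h$. The assumption holds where the lemma is applied, because $m_{A_s}$ is radial.
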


\begin{proof}
We use the Littlewood-Paley decomposition to construct the kernel $H$ and establish the desired estimates. 

Let $\varphi_0 \in C_0^\infty(\mR^d)$ such that $\text{supp}(\varphi_0) \subseteq \{|\xi| \leq 2\}$ and $\varphi_0 \equiv 1$ on $\{|\xi| \leq 1\}$. Define $\varphi(\xi) = \varphi_0(\xi) - \varphi_0(2\xi)$, supported on $\{1/2 \leq |\xi| \leq 2\}$. We have $\sum_{j \in \mathbb{Z}} \varphi(2^{-j}\xi) = 1$ for $\xi \neq 0$. 

\medskip\noindent
\textit{(a):} Since $h\in \sS'(\mR^d)$ is supported in $|\xi| \geq 1$, we can define 
\[
H_j(y) := \frac{1}{(2\pi)^d} \int_{\mR^d} \e^{iy \cdot \xi} h(\xi) \varphi(2^{-j}\xi) \d \xi, \quad j\geq 0
\]
Integrating by parts $2N (\leq d+3)$ times and using the assumption on $h$, one sees that  
\begin{align*}
    |y|^{2N} |H_j(y)| =& \left| \frac{1}{(2\pi)^d} \int_{\mR^d} \e^{iy \cdot \xi} (-\Delta)_\xi^N \left[ h(\xi) \varphi(2^{-j}\xi) \right]  \d \xi \right| \\
    \leq & C (1+j) 2^{j(d+\beta-2N)}, 
\end{align*}
where $C$ only depends on $d$ and $A$. Therefore, 
\[ |H_j(y)| \leq C (1+j) 2^{j(d+\beta)} (2^j |y|)^{-2N}. \]
Put 
\[
H(y) := \sum_{j=0}^\infty H_j(y). 
\] 

Next, we proceed to show that $H$ satisfies \eqref{eq:H1}, followed by a verification that $T_h$ is given by \eqref{eq:def_H1}. 

To bound $H(y)=\sum_{j=0}^\infty H_j(y)$, we split the sum at integer $j_0$ where $2^{j_0} \sim |y|^{-1}\, (|y|<1)$, and use $N=0$ for $j \leq j_0$, and $d+\beta<2N\leq d+3$ for $j > j_0$: 
\begin{equation}\label{eq:H1_small}
    \begin{aligned}
        |H(y)| \leq & \sum_{j=0}^\infty |H_j(y)| \leq C\sum_{j \leq j_0} (1+j) 2^{j(d+\beta)} + C\sum_{j > j_0} (1+j) 2^{j(d+\beta)} (2^j |y|)^{-2N}\\
        \leq & C (1+j_0) 2^{j_0(d+\beta)} + |y|^{-2N} \sum_{j > j_0} (1+j_0) 2^{j(d+\beta-2N)} \\
        \leq & C |y|^{-d-\beta} (1+|\log |y||), \quad |y|<1, 
    \end{aligned}
\end{equation}
where $C$ only depends on $d$ and $A$. By same argument and choosing $2N\in \{d+2, d+3\}$, one can also see that, 
\begin{equation}\label{eq:H2_big}
    |H(y)|\leq C|y|^{-d-2}, \quad |y|\geq 1, 
\end{equation}
where $C$ only depends on $d$ and $A$. 

By \eqref{eq:H1_small} and \eqref{eq:H2_big}, the right-hand side of \eqref{eq:def_H1} is well-defined. To prove \eqref{eq:def_H1}, we take Fourier tansformation: 
\begin{align*}
    &\sF\left( \mathop{\mathrm{P.V.}} \int_{\mR^d} \left(f(x-y)-f(x)\right) H(y) \d y \right)(\xi) \\ 
    =& \widehat{f}(\xi) \int_{\mR^d} (\cos(y\cdot \xi) -1) H(y) \d y = \widehat{f}(\xi) \sum_{j\geq 0} \int_{\mR^d} (\cos(y\cdot \xi) -1) H_j(y) \d y \\
    =& \widehat{f}(\xi) \sum_{j\geq 0} ( \mathrm{Re}(h(\xi))-h(0)) \varphi(2^{-j}\xi) \\
    =& \widehat{f}(\xi) h(\xi)=\sF(T_h f)(\xi). 
\end{align*}
Here we used the fact that $h$ is real-valued and $h(0)=0$. So we obtain our desired assertion. 

\medskip\noindent
\textit{(b):} In this case $h(\xi)$ is supported in $B_2$, to align notation, we put $k = -j \geq -1$. Set 
\[
H_k(y) = \frac{1}{(2\pi)^d} \int_{\mR^d} \e^{iy \cdot \xi} h(\xi) \varphi(2^k \xi) \, \d \xi. 
\]
Following similar arguments as in (a), we have 
\begin{equation*}
    \begin{aligned}
        |y|^{2N} |H_k(y)| =& \left| \frac{1}{(2\pi)^d} \int_{\mR^d} \e^{iy \cdot \xi} (-\Delta)_\xi^N \left[ h(\xi) \varphi(2^{k}\xi) \right]  \d \xi \right| \\
        \leq &  Ck 2^{k(2N-d-s)}
    \end{aligned}
\end{equation*}
Spliting the sum $\sum_{k=0}^\infty H_k(y)$ at $k_0$ where $2^{k_0} \sim |y| > 4$, we use $d+s< 2N \leq d+4$ for $k \leq k_0$, and use $N=0$ for $k > k_0$: 
\begin{equation*}
    \begin{aligned}
        \sum_{k=0}^\infty H_k(y) \leq& C\sum_{-1\leq k \leq k_0} (2+k) 2^{-k(d+s)} (2^{-k}|y|)^{-2N} + C\sum_{k > k_0} (2+k) 2^{-k(d+s)}\\
        = & C |y|^{-2N} \sum_{k \leq k_0} (2+k) 2^{k(2N-d-s)} + C\sum_{k > k_0} (2+k) 2^{-k(d+s)}\\
        \leq & C |y|^{-2N} k_0 2^{k_0(2N-d-s)}+ C k_0 2^{-k_0(d+s)}\\
        \leq & C |y|^{-d-s}\log|y|, 
    \end{aligned}
\end{equation*}
where $C$ only depends on $d$ and $B$. Noting that $h\in L^1$, we have  
\[
|H(y)|\leq C(d, B), \quad y\in \mR^d.  
\]
Then following similar arguments as in (a), we can obtain our desired assertion.
\end{proof}

Now we are in a position to give  
\begin{proof}[Proof of Proposition \ref{prop:A}]
    Let $\chi\in C_c^\infty(B_2)$ be a cut-off function such that $\chi(x) \equiv 1$ when $x\in B_1$. We set 
    \[
    h_{I}:=(1-\chi) m_{A_s} ~ \text{ and }~ h_{O}:= \chi m_{A_s}. 
    \]
    We choose $M\gg 1$ such that 
    \[
        \sup_{|k|\leq d+4} |\nabla^k R_M| \ll 1. 
    \]
    It is then straightforward to show that $h_I$ and $h_O$ satisfy the assumptions of Lemma \ref{lem:dyadic} (a) and (b), respectively. By Lemma \ref{lem:dyadic}, there exist integral kernels $H_I$ and $H_O$ associated with $h_I$ and $h_O$ that satisfy \eqref{eq:H1} and \eqref{eq:H2}, respectively. Put $K_{A_s}:= H_I+H_O$, then we can see that $K_{A_s}$ satisfies \eqref{eq:Ks}, and for each $f\in \sS(\mR^d)$,
    \begin{align*}
        A_s f(x) = & \mathrm{P.V.} \int_{\mR^d} \left( f(x-y)-f(x) \right) K_{A_s}(y)\d y + h_O(0) f(x)\\
        = & \mathrm{P.V.} \int_{\mR^d} \left( f(x-y)-f(x) \right) K_{A_s}(y)\d y.
    \end{align*}
    The proof is finished.
\end{proof}

\subsection{Extensions of Operators}

While $T$ and $A_s$ have thus far been defined as operators on $\sS(\mathbb{R}^d)$ (see Sections \ref{sec:T} and \ref{sec:A}), the approximate estimates of their kernels allow for their extension to broader function spaces. For instance, for any $s \in [7/4, 2]$ and $0 < \eps \ll 1$, define the space
\[
\cX_{s,\eps}:= \left\{ f \in C^{\beta+\eps}_{\rm loc}(\mR^d) : \beta=s-{2}/{3}, \quad \sup_{x\in \mR^d} \frac{|f(x)|}{(1+|x|)^{s-\eps}}<\infty\right\}.
\]
One can extend operator $A_s$ given in Definition \ref{def:As} to functions $f\in \cX_{s,\eps}$, and the extended operator can be still given by the same singular integral formula:
\begin{equation}\label{eq:def_As}
    A_s f(x) = \lim_{\delta \to 0} \underbrace{\int_{|y|>\delta} (f(x-y) - f(x))K_{A_s}(y) \, \d y}_{=:A_s^\delta f(x)}, \quad f\in \cX_\eps. 
\end{equation}
Moreover, we have 
\begin{lemma}[Symmetry of $A_s$] \label{lem:dual}
Let $s \in [7/4, 2]$ and $0<\eps\ll1$. The operator $A_s$ given by \eqref{eq:def_As} satisfies 
\begin{equation*}
    \langle A_s f, g \rangle = \langle f, A_s g \rangle, \quad f \in \mathcal{X}_{s,\eps} ~\text{ and }~ g \in C_c^\infty(\mR^d). 
\end{equation*}
\end{lemma}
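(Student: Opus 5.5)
The plan is to prove the identity first for the truncated operators $A_s^\delta$ and then let $\delta\to0$. For fixed $\delta>0$, Fubini and the change of variables $x\mapsto x+y$, $y\mapsto -y$ give
\begin{equation*}
\langle A_s^\delta f, g\rangle = \int_{\mR^d}\int_{|y|>\delta} \bigl(f(x-y)-f(x)\bigr)g(x)K_{A_s}(y)\,\d y\,\d x = \langle f, A_s^\delta g\rangle .
\end{equation*}
Here we use that $K_{A_s}$ is symmetric, as guaranteed by Proposition \ref{prop:A}. Indeed, $\int\int f(x-y)g(x)K(y)\,\d y\,\d x=\int\int f(x)g(x+y)K(y)\,\d y\,\d x$, and replacing $y$ by $-y$ turns this into $\int f(x)\int_{|y|>\delta} g(x-y)K(y)\,\d y\,\d x$. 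Fubini requires absolute integrability of $|f(x-y)||g(x)||K_{A_s}(y)|$ over $\{|y|>\delta\}\times \mathrm{supp}\, g$. By \eqref{eq:Ks}, $|K_{A_s}(y)|\lesssim |y|^{-d-s}(1+\log|y|)$ for large $|y|$, while $|f(x-y)|\lesssim (1+|x|+|y|)^{s-\eps}$. Since $\int_{|y|>1}|y|^{s-\eps-d-s}\log|y|\,\d y<\infty$, the integrand is integrable; on $\delta<|y|<1$ everything is bounded. The terms involving $f(x)$ are split off separately, each being finite because $\int_{|y|>\delta}|K_{A_s}|<\infty$.

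Next we pass $\delta\to0$ on each side by dominated convergence. On the right, $g\in C_c^\infty$, so $A_s^\delta g\to A_s g$ pointwise. We also need a bound $|A_s^\delta g(x)|\lesssim (1+|x|)^{-d-s}\log(2+|x|)$ uniformly in $\delta$, so that it integrates against $|f|\lesssim(1+|x|)^{s-\eps}$. This bound holds because $-\eps<0$ gives integrability $\int (1+|x|)^{-d-\eps}\log<\infty$. To get it, for $|x|$ large and outside $\mathrm{supp}\, g$ we have $A_s^\delta g(x)=\int g(x-y)K_{A_s}(y)\,\d y$ with $|y|\sim|x|$. For bounded $x$ we symmetrize using the symmetry of $K_{A_s}$:
\begin{equation*}
A_s^\delta g(x)=\tfrac12\int_{|y|>\delta}\bigl(g(x-y)+g(x+y)-2g(x)\bigr)K_{A_s}(y)\,\d y,
\end{equation*}
which is bounded by $\|\nabla^2 g\|_\infty\int_{|y|<1}|y|^{2-d-\beta}(1+|\log|y||)\,\d y+C$, finite since $\beta\le4/3<2$.

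On the left, $g$ has compact support, so it suffices to show that $A_s^\delta f\to A_s f$ uniformly on compact sets $Q\supset\mathrm{supp}\,g$; this is the step needing care and the main obstacle, since $f$ is only $C^{\beta+\eps}_{\rm loc}$ with $\beta+\eps\in(1,2)$. We again symmetrize. For $x\in Q$ and $|y|<1$, the local $C^{\beta+\eps}$ regularity on a neighbourhood of $Q$ gives
\begin{equation*}
|f(x-y)+f(x+y)-2f(x)|\le [f]_{\beta+\eps;Q_1}|y|^{\beta+\eps},
\end{equation*}
using that $\nabla f$ is $(\beta+\eps-1)$-Hölder, so the first-order terms cancel. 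Then
\begin{equation*}
\sup_{x\in Q}|A_s^{\delta}f(x)-A_s^{\delta'}f(x)|\lesssim \int_{\delta'<|y|<\delta}|y|^{\beta+\eps-d-\beta}(1+|\log|y||)\,\d y\to0 .
\end{equation*}
The tail $|y|\ge1$ is independent of $\delta$ and is finite by the growth bound on $f$. Hence $A_s f$ exists, the limit in \eqref{eq:def_As} is uniform on $Q$, $A_sf$ is locally bounded, and $\langle A_s^\delta f,g\rangle\to\langle A_sf,g\rangle$. Combining the two limits yields the lemma.
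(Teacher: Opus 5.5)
Your argument is correct, but it removes the truncation differently from the paper. The paper rewrites each truncated pairing in the symmetrized form $\frac12\iint_{|x-y|>\delta}K_{A_s}(x-y)(g(x)-g(y))(f(x)-f(y))\,\d x\,\d y$. It then lets $\delta\to0$ inside this double integral, after checking that the integrand is absolutely integrable on all of $\mR^d\times\mR^d$. Near the diagonal this check uses only the local Lipschitz property of $f$: the smoothness of $g$ supplies a second factor $|x-y|$, and $\beta<2$. On $B_R\times B_R^c$ it uses the growth bound. So both pairings are identified with one absolutely convergent bilinear form.

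You instead prove the exact identity $\langle A_s^\delta f,g\rangle=\langle f,A_s^\delta g\rangle$ at each level $\delta$, using Fubini, a translation and the symmetry of $K_{A_s}$. You then take the limit separately on each side. On the right you use the $\delta$-uniform decay bound for $A_s^\delta g$. On the left you use the second-order difference estimate $|f(x+y)+f(x-y)-2f(x)|\lesssim|y|^{\beta+\eps}$, which gives locally uniform convergence of $A_s^\delta f$.

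This route avoids the global integrability check for the energy integrand. It also makes explicit a point the paper only asserts: that $A_s f$ exists as a principal value and that $A_s^\delta f$ is bounded on $\mathrm{supp}\,g$ uniformly in $\delta$. That is exactly where the full $C^{\beta+\eps}_{\rm loc}$ hypothesis in $\cX_{s,\eps}$ is needed, rather than mere Lipschitz continuity. The paper's route, in turn, gives the symmetric bilinear-form representation as a by-product.
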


\begin{proof}
By definition, the duality pairing is given by 
$$
\langle f, A_s g \rangle = \int_{\mathbb{R}^d} f(x) \Big( \lim_{\delta \to 0} A_s^\delta g(x) \Big) \, \mathrm{d}x.
$$
Since $g \in C_c^\infty(\mathbb{R}^d)$, \eqref{eq:Ks} implies that $A_s^\delta g(x)$ converges pointwise to $A_s g(x)$ and satisfies the uniform decay bound 
\[
|A_s^\delta g(x)| \leq C (1+|x|)^{-d-s} \log(2+|x|).
\]
Given the growth condition on $f$, the product obeys 
\[
|f(x) A_s^\delta g(x)| \leq C (1+|x|)^{-d-\eps} \log(2+|x|), 
\]
which provides an $L^1(\mathbb{R}^d)$ majorant. Applying the Dominated Convergence Theorem, we may pull the limit outside the integral to obtain
\[
\langle f, A_s g \rangle = \lim_{\delta \to 0} \iint_{|x-y|>\delta} K_{A_s}(x-y) (g(x)-g(y)) f(x) \, \mathrm{d}y\,  \mathrm{d}x.
\]

For any fixed $\delta > 0$, the integrand is absolutely integrable on the truncated domain. Invoking Fubini's Theorem and the symmetry of the kernel $K_{A_s}$, we can re-index $x$ and $y$ to yield the symmetrized form:
\[
\langle f, A_s g \rangle =  \frac{1}{2} \lim_{\delta \to 0}\iint_{|x-y|>\delta} K_{A_s}(x-y) (g(x)-g(y)) (f(x)-f(y)) \, \mathrm{d}x \mathrm{d}y.
\]
To pass to the limit $\delta \to 0$, we establish the absolute integrability of 
\[
G(x,y) := |K_{A_s}(x-y)| |g(x)-g(y)| |f(x)-f(y)|
\]
on $\mathbb{R}^d \times \mathbb{R}^d$. 

Assume $\mathrm{supp}(g) \subset B_R$, $R\gg 1$. The $C_c^\infty$ regularity of $g$ and the local Lipschitz property of $f$ (since $\beta=s-\frac{2}{3} > 1$) imply that 
\[
\iint_{B_R \times B_R} G(x,y) \, \d x \d y\leq C \iint_{B_R \times B_R}|x-y|^{-d+2-\beta} \log(1+1/|x-y|)\, \d x \d y<\infty,
\]
since $2-\beta = \frac{8}{3}-s> 0$.

When $x\in B_R$ and $y\in B_R^c$, we have $G(x,y)\leq |K_{A_s}(x-y)|(C+|f(y)|)$. By the definition of $\cX_{s,\eps}$ and \eqref{eq:Ks}, we have 
\[
\iint_{B_R \times B_R^c} G(x,y) \, \d x \d y \leq C \int_{B_R^c} |y|^{-d-\eps} \log (1+|y|) \d y<\infty.  
\]
By symmetry, one sees that $G \in L^1(\mathbb{R}^d \times \mathbb{R}^d)$, the Dominated Convergence Theorem allows us to pass the limit inside the integral to obtain the identity
\begin{equation} \label{eq:sym}
\langle f, A_s g \rangle = \frac{1}{2} \iint_{\mathbb{R}^d \times \mathbb{R}^d} K_{A_s}(x-y) (g(x)-g(y)) (f(x)-f(y)) \mathrm{d}x \mathrm{d}y.
\end{equation}

Following the same procedure for $\langle A_s f, g \rangle$, justified by the compact support of $g$ and the uniform bounds on $A_s^\delta f$ provided by the definition of $\mathcal{X}_{s,\eps}$, we arrive at the identical symmetrized expression \eqref{eq:sym}. Thus, the operator $A_s$ is symmetric with respect to this pairing, concluding the proof.
\end{proof}

\section{Properties of Invariant Measures and Poisson Equations}\label{sec:IM_P} 

The purpose of this section is to establish the asymptotic behavior of our key components: the decay estimates for $T\pi(x)$ and the growth estimates for $A_s u(x)$ as $|x| \to \infty$. To streamline the presentation and avoid tedious technicalities, we assume $b, f \in C^\infty$ throughout the remainder of this paper. We emphasize, however, that all subsequent estimates are independent of the regularity of $b$ and $f$. 

\subsection{Invariant measure $\pi$}

First, for the reader's convenience, we provide elementary integrability estimates for the measures $\pi$ and $\pi_\alpha$. We refer to \cite{FFMP2009sharp} and \cite{BKRS2015fokker} for more delicate pointwise estimates for $\pi$.
 
\begin{lemma}\label{lem:int}
    Suppose $b$ satisfies \eqref{eq:dispt}, then
    \begin{enumerate}[(a)]
        \item for any $\lambda\in (0,\kappa/\theta)$, it holds that 
        \begin{equation}\label{eq:int_2}
        \langle \exp(\lambda |x|^\theta), \pi\rangle <\infty. 
        \end{equation}
        \item for any $\alpha>2-\theta$ and $q\in (0, \alpha+\theta-2)$, it holds that  
        \begin{equation}\label{eq:int_alpha}
            \langle |x|^{q}, \pi_\alpha \rangle <\infty. 
        \end{equation}
    \end{enumerate}
\end{lemma}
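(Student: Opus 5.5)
The plan is a Lyapunov-function argument: apply the generator to a suitable test function $V$, use that $\langle LV,\pi\rangle=0$ (resp.\ $\langle L_\alpha V,\pi_\alpha\rangle=0$), and read off the moment bound from the dissipativity \eqref{eq:dispt}. Since $V$ is unbounded, I will work with truncations and pass to the limit by Fatou's lemma.

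\emph{Part (a).} Take $V(x)=\exp(\lambda\langle x\rangle^\theta)$ with $\langle x\rangle=(1+|x|^2)^{1/2}$. For $|x|\geq1$,
\[
\nabla V=\lambda\theta\langle x\rangle^{\theta-2}xV, \qquad
\Delta V=\Big[\lambda^2\theta^2\langle x\rangle^{2\theta-4}|x|^2+\lambda\theta\,O(\langle x\rangle^{\theta-2})\Big]V .
\]
Combining this with $b\cdot x\leq-\kappa|x|^\theta+K_1$ gives
\[
LV\leq\lambda\theta\langle x\rangle^{\theta-2}V\big[-\kappa|x|^\theta+\lambda\theta|x|^\theta+C\big].
\]
Because $\lambda<\kappa/\theta$, the bracket is at most $-c|x|^\theta$ for $|x|$ large. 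Hence
\[
LV\leq -c\langle x\rangle^{2\theta-2}V+C\1_{B_R}.
\]
When $\theta<1$ the prefactor $\langle x\rangle^{2\theta-2}$ decays, but it only polynomially weakens $V$. I can absorb it by proving integrability of $\langle x\rangle^{2\theta-2}\exp(\lambda'|x|^\theta)$ for some $\lambda'\in(\lambda,\kappa/\theta)$, which dominates $\exp(\lambda|x|^\theta)$.

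To make $\langle LV,\pi\rangle\leq 0$ rigorous, compose with a concave cutoff: put $V_n=\phi_n(V)$, where $\phi_n$ is smooth, concave, $\phi_n(r)=r$ for $r\leq n$, and $\phi_n$ is constant for $r\geq 2n$. Then
\[
LV_n=\phi_n'(V)LV+\phi_n''(V)|\nabla V|^2\leq \phi_n'(V)LV .
\]
Since $V_n$ is bounded and $V_n-\mathrm{const}$ has compact support, $\langle LV_n,\pi\rangle=0$ by invariance. Therefore
\[
\int\phi_n'(V)\,c\langle x\rangle^{2\theta-2}V\,\d\pi\leq C.
\]
Letting $n\to\infty$ with Fatou's lemma finishes (a).

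\emph{Part (b).} Here I take $V(x)=\langle x\rangle^{p}$ with $p=q+2-\theta\in(0,\alpha)$, so $V$ lies in the domain where $(-\Delta)^{\alpha/2}V$ is defined. There are two terms to control.
\begin{itemize}
\item \emph{Drift term.} $b\cdot\nabla V=p\langle x\rangle^{p-2}b\cdot x\leq -p\kappa\langle x\rangle^{p-2}|x|^\theta+C$, which is $\leq -c|x|^{q}+C$ for large $|x|$, since $p-2+\theta=q$.
\item \emph{Nonlocal term.} For $0<p<\alpha$ there is the standard estimate $|(-\Delta)^{\alpha/2}\langle x\rangle^p|\leq C\langle x\rangle^{p-\alpha}$. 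Split the integral into $|y|<|x|/2$, where Taylor's formula and $|\nabla^2V|\lesssim\langle x\rangle^{p-2}$ apply, and $|y|\geq|x|/2$, where the crude bound $V(x\pm y)\lesssim\langle x\rangle^p+|y|^p$ applies. In particular the term is bounded, because $p<\alpha$.
\end{itemize}
Together these give $L_\alpha V\leq -c|x|^q+C$. The condition $\alpha>2-\theta$ is exactly what makes $q>0$ possible with $p<\alpha$. For the truncation, $\phi_n(V)$ again works: concavity of $\phi_n$ yields the pointwise inequality $\phi_n(V(x-y))-\phi_n(V(x))\leq\phi_n'(V(x))\big(V(x-y)-V(x)\big)$ inside the nonlocal integral, so $L_\alpha\phi_n(V)\leq\phi_n'(V)L_\alpha V$. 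The argument then concludes with invariance and Fatou's lemma as in (a).

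\emph{Main obstacle.} The step I expect to be hardest is the fractional Laplacian bound for $\langle x\rangle^p$ with the correct exponent, together with checking that $\langle L_\alpha\phi_n(V),\pi_\alpha\rangle=0$ for functions that are merely bounded and $C^2$. If needed, I would justify the latter via the Echeverría/weak-invariance characterisation of $\pi_\alpha$.
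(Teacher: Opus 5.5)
Your proof follows the paper's route. You use the same Lyapunov functions: $\exp(\lambda'\langle x\rangle^\theta)$ with $\lambda<\lambda'<\kappa/\theta$ in (a), and $\langle x\rangle^p$ with $p<\alpha$ in (b). You also use the same drift inequalities $LV\le -c\langle x\rangle^{2\theta-2}V+C$ and $L_\alpha V\le -c|x|^{p+\theta-2}+C$, and the same bound on $(-\Delta)^{\alpha/2}\langle x\rangle^p$. Your concave cutoff $\phi_n(V)$ with Fatou's lemma is a clean justification of the step $\langle LV,\pi\rangle\le 0$ (resp.\ $\langle L_\alpha V,\pi_\alpha\rangle\le 0$), which the paper only uses formally.

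There is one slip in (b). Your choice $p=q+2-\theta$ lies in $(2-\theta,\alpha)$, not necessarily in $(0,\alpha)$. If $\theta>2$ and $q\le\theta-2$, then $p\le 0$ and the argument fails:
\begin{itemize}
\item for $p=0$, $V$ is constant and carries no information;
\item for $p<0$, $V$ does not tend to infinity, so $\phi_n(V)$ is not constant off a compact set;
\item also for $p<0$, multiplying \eqref{eq:dispt} by $p$ reverses the inequality, so $b\cdot\nabla V=p\langle x\rangle^{p-2}\,b\cdot x$ gets no upper bound of the form $-c|x|^q+C$.
\end{itemize}
The paper avoids this by taking $p\in((2-\theta)\vee 0,\alpha)$. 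In your setup, first replace $q$ by some $q'\ge q$ with $\theta-2<q'<\alpha+\theta-2$; this is possible because $\alpha>0$. Then use $|x|^q\le 1+|x|^{q'}$.
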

\begin{proof}
    (a) Let $V(x)=\exp[\lambda' (1+|x|^2)^{\theta/2}]$. Since 
    \[
    \nabla V(x) = \lambda' \theta x (1+|x|^2)^{\frac{\theta}{2} - 1} V(x) 
    \]
    and 
    \[
     \Delta V(x) = \lambda' \theta V(x) (1+|x|^2)^{\frac{\theta}{2} - 2} \Bigg[ \lambda' \theta |x|^2 (1+|x|^2)^{\frac{\theta}{2}} + (\theta - 2) |x|^2 + d (1+|x|^2) \Bigg],  
    \]
    one can verify that 
    \begin{equation*}
        LV\leq -c|x|^{2\theta-2} V(x) + K
    \end{equation*}
    provided that $\lambda' < \kappa/\theta $. So for any $0<\lambda<\lambda'<\kappa/\theta$, we have 
    \[
    \langle \exp(\lambda |x|^\theta), \pi\rangle\leq C \langle 1+|x|^{2\theta-2} V(x), \pi\rangle \leq C\langle LV,\pi \rangle+ C \leq C <\infty. 
    \]

\medskip\noindent
    (b) Let \begin{equation}\label{eq:Vp}
    V_p(x) = (1+|x|^2)^{p/2}, \quad p\in ((2-\theta)\vee 0, \alpha). 
    \end{equation} 
    Noting that 
    \[
        |(-\Delta)^{\alpha/2} V_p(x)|  \leq C(1+|x|^{p-\alpha})
    \]
    and 
    \[
        b(x)\cdot\nabla V_p(x) \leq -c|x|^{p+\theta-2} + C |x|^{p-2}, 
    \]
    we see that 
    \begin{equation}\label{eq:LaV}
        L_\alpha V_p (x) \leq -c |x|^{p+\theta-2} + K. 
    \end{equation}
    Following the same argument in (a), we obtain our desired assertion. 
\end{proof}
 
Next, we establish the growth control of $T \pi$ for later use in the proof of Theorem~\ref{thm:main}.

\begin{proposition}[Decay estimates of $T \pi$]\label{prop:IM}
    Suppose that $b$ satisfies \eqref{eq:dispt} and \eqref{eq:growth}, then it holds that  
    \begin{equation}
         \big| T \pi(x) \big| \leq C \exp\left( -\frac{\kappa  |x|^\theta}{10d\theta} \right), \quad x\in \mR^d, 
    \end{equation}
    where the constant $C>0$ only depends on $\Theta$. 
\end{proposition}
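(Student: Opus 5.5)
The plan is to localize around the point $x$. Far from $x$ we use the fast decay of $K_T$ from Lemma \ref{lem:T}(b) together with the integrability of Lemma \ref{lem:int}(a). Near $x$ we use the stationary Fokker--Planck equation satisfied by the density $p$ of $\pi$, namely $\Delta p=\operatorname{div}(bp)$ in $\mathscr S'$, combined with a local $L^\infty$ bound on $p$ whose dependence on $\|b\|_{L^\infty}$ is polynomial. We fix $a=2\theta$ in the construction of $T$.

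\emph{Step 1 (far part).} Fix $x$ with $|x|\gg1$ and a cutoff $\chi_x=\chi(\cdot-x)$ with $\chi\equiv1$ on $B_1$ and $\operatorname{supp}\chi\subset B_2$. Since $(1-\chi_x)(x)=0$, we have
\[
T\pi(x)=T(\chi_x p)(x)-\int (1-\chi_x(y))\,p(y)K_T(x-y)\,\d y .
\]
In the second term $|x-y|\ge1$, so $|K_T(x-y)|\le C\e^{-|x-y|^{2\theta}}$. We split the domain into $|y|\ge|x|/2$ and $|y|<|x|/2$.
\begin{itemize}
\item On $\{|y|\ge|x|/2\}$ we bound the term by $C\pi(|y|\ge|x|/2)$. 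By Lemma \ref{lem:int}(a) this is at most $C\e^{-\lambda 2^{-\theta}|x|^\theta}$ for any $\lambda<\kappa/\theta$.
\item On $\{|y|<|x|/2\}$ we bound the term by $C\e^{-(|x|/2)^{2\theta}}$.
\end{itemize}
The second bound is harmless. The first one, however, loses the factor $2^{-\theta}$. To fix this, split instead at $|y-x|\ge \eta|x|$ versus $|y-x|<\eta|x|$ for a small $\eta=\eta(\theta)$.
\begin{itemize}
\item Where $|y-x|\ge\eta|x|$, the kernel gives $\e^{-\eta^{2\theta}|x|^{2\theta}}$, which eventually beats any $\e^{-c|x|^\theta}$.
\item Where $|y-x|<\eta|x|$, we have $|y|\ge(1-\eta)|x|$, so Lemma \ref{lem:int}(a) gives $\e^{-\lambda(1-\eta)^\theta|x|^\theta}$.
\end{itemize}
Both are well below $\e^{-\kappa|x|^\theta/(10d\theta)}$.

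\emph{Step 2 (near part via the equation).} Writing $q=\chi_x p$, the product rule gives
\[
(I-\Delta)q=q-\operatorname{div}(\chi_x b p)+b p\cdot\nabla\chi_x-2\operatorname{div}(p\nabla\chi_x)+p\Delta\chi_x .
\]
Hence
\[
T q=T(I-\Delta)^{-1}\big[q+bp\cdot\nabla\chi_x+p\Delta\chi_x\big]-T(I-\Delta)^{-1}\operatorname{div}\big[\chi_x bp+2p\nabla\chi_x\big].
\]
The symbols $m_T/(1+|\xi|^2)$ and $m_T\,\xi/(1+|\xi|^2)$ have orders $-4/3$ and $-1/3$. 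Arguing as in Lemma \ref{lem:dyadic}, their kernels are bounded by $C|z|^{-d+1/3}$ near $0$ and are integrable at infinity. Since all the brackets are supported in $B_2(x)$, we obtain
\[
|Tq(x)|\le C\big(1+\|b\|_{L^\infty(B_2(x))}\big)\,\|p\|_{L^\infty(B_2(x))}.
\]

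\emph{Step 3 (local sup bound, the main obstacle).} We need
\[
\|p\|_{L^\infty(B_2(x))}\le C\big(1+\|b\|_{L^\infty(B_3(x))}\big)^{d}\,\pi(B_3(x)),
\]
with a constant independent of any regularity of $b$. The plan is to rescale. Let $r=(1+\|b\|_{L^\infty(B_3(x))})^{-1}$. On each ball $B_{2r}(z)\subset B_3(x)$, the rescaled function $\tilde p(w)=p(z+rw)$ solves $\Delta\tilde p=\operatorname{div}(\tilde b\tilde p)$ with $|\tilde b|\le1$. A De Giorgi--Moser estimate for divergence-form equations with bounded drift then gives $\sup_{B_1}\tilde p\le C\int_{B_2}\tilde p$, i.e. $\sup_{B_r(z)}p\le Cr^{-d}\pi(B_{2r}(z))$. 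Covering $B_2(x)$ by such balls yields the claim. Combining this with \eqref{eq:growth} and Lemma \ref{lem:int}(a), we get
\[
|Tq(x)|\le C\exp\Big(\frac{(d+1)\kappa}{100d\theta}(|x|+3)^\theta\Big)\e^{-\lambda(|x|-3)^\theta}.
\]
Choose $\lambda$ close to $\kappa/\theta$, and absorb the shifts by $3$ at the cost of a slightly smaller constant in the exponent for $|x|$ large. The exponent is then at most $-\kappa|x|^\theta/(10d\theta)$ for $|x|$ large. For bounded $|x|$ the same estimates give a uniform constant depending only on $\Theta$. The delicate points are the drift-independence of the Moser constant after scaling and the bookkeeping of exponents; I expect Step 3 to be where most of the care is needed.
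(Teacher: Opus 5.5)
Your proof is correct, and it takes a genuinely different route from the paper.

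\textbf{The paper's route.} It relies on a global weighted regularity statement. With $\phi=\e^{W}$ and $W=\frac{\kappa}{5d\theta}(1+|x|^2)^{\theta/2}$, it invokes \cite[Theorem 3.3.1]{BKRS2015fokker} to get $\phi\pi\in W^{1,4d}(\mR^d)$. Morrey's embedding then gives the weighted bound $\|\pi\|_{C^{3/4}(B_1(x))}\le C\exp(-\kappa|x|^\theta/(6d\theta))$. Since $3/4>2/3$, this H\"older bound directly absorbs the singular part $|z|^{-d-2/3}$ of $K_T$. The region $|z|\ge1$ is handled by the pointwise decay of $\pi$ against $\e^{-|z|^{2\theta}}$.

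\textbf{Your route.} You never need H\"older continuity of the density. The Fokker--Planck equation lets you trade the order-$2/3$ operator $T$ for $T(I-\Delta)^{-1}$ and $T(I-\Delta)^{-1}\nabla$. Their kernels are, up to normalization, $G_{4/3}+\widehat R_M(0)G_2-R_M*G_2$ and its gradient, with $G_\gamma$ the Bessel kernels; all of these lie in $L^1$. So only local sup bounds on $p$ and $b$ enter. You obtain these from a rescaled Harnack/Moser estimate, whose constants are independent of the regularity of $b$ and depend polynomially on $\|b\|_{L^\infty}$, combined with the exponential tails of Lemma \ref{lem:int}(a).

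\textbf{What each buys.} Your argument rests on classical De Giorgi--Moser theory rather than a black-box weighted Sobolev theorem whose integrability hypotheses on $b$ and $\phi$ must be checked. The drift enters only through the explicit factor $(1+\|b\|_{L^\infty(B_3(x))})^{d+1}$, so this proposition would tolerate considerably faster growth of $b$. You also get a decay exponent close to $\kappa/\theta$ rather than $\kappa/(10d\theta)$. The paper's route is shorter once the BKRS theorem is granted.

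\textbf{Two minor points.} First, take $r=\frac12(1+\|b\|_{L^\infty(B_3(x))})^{-1}$, so that $B_{2r}(z)\subset B_3(x)$ for every $z\in B_2(x)$. Second, your $\eta$-split in Step~1 is a real improvement. The paper's terse split at $|z|=|x|/2$ meets the same $2^{-\theta}$ loss if one only uses $|x+z|\ge|x|/2$. It implicitly needs the same refinement, or an explicit use of the super-exponential decay of $K_T$, to reach the constant $1/(10d)$.
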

\begin{proof}

    Set 
    \[W(x)=\frac{\kappa}{5d\theta} (1+|x|^2)^{\frac{\theta}{2}} ~ \text{and}~ \phi=\e^{W}. 
    \]
    By our \eqref{eq:growth} and using Lemma \ref{lem:int}, one can verify that $b\in L^{d+\eps}(\pi)$, $\phi\in L^1(\pi)$ and $\nabla \phi=\nabla W \phi \in L^{4d}(\pi)$. Thanks to \cite[Theorem 3.3.1]{BKRS2015fokker}, we have 
    $v:=\phi \pi \in W^{1,4d}(\mR^d)$. Sobolev embedding then implies that for any $x\in \mR^d$, 
    \begin{equation}\label{eq:pi_holder}
        \begin{aligned}
            \|\pi\|_{C^{\frac{3}{4}}(B_1(x))} \leq& C \|v\phi^{-1}\|_{W^{1,4d}(B_1(x))}\leq  C \|\phi^{-1}\|_{W^{1,4d}(B_1(x))} \| v\|_{W^{1,4d}(B_1(x))}\\
            \leq & C \|(1+W)\e^{-W}\|_{L^\infty(B_1(x))} \| v\|_{W^{1,4d}}\\
            \leq& C \exp\left(-\frac{\kappa  |x|^\theta}{6d\theta} \right). 
        \end{aligned}
    \end{equation}
    Take $a=2\theta$ in Lemma \ref{lem:T}, using the definition of $T$ and the estimates for its kernel (Lemma \ref{lem:T} (b)) and \eqref{eq:pi_holder}, for $|x|\gg 1$, we have 
    \begin{equation*}
        \begin{aligned}
            \big| T \pi(x) \big| \leq& C|\pi(x)|+C \|\pi\|_{C^{\frac{3}{4}}(B_1(x))} \int_{B_1} |z|^{-d+\frac{1}{12}} \d z \\
            & + C \int_{B_1^c} [\pi(x)+\pi(y+z)] \e^{-|z|^{2\theta}} \d z \\
            \leq& C \exp\left(-\frac{\kappa  |x|^\theta}{6d\theta} \right) + C \int_{1\leq |z|<|x|/2} \pi(x+z) \e^{-|z|^{2\theta}} \d z \\
            &+ C \int_{|z|\geq|x|/2} \pi(x+z) \e^{-|z|^{2\theta}} \d z \leq C \exp\left(-\frac{\kappa  |x|^\theta}{10d\theta} \right).
        \end{aligned}
    \end{equation*}
\end{proof}

\subsection{Nonlocal Poisson Equations}
In this subsection, we fix
\[
p_0 : = \frac{3}{2} \vee \frac{4-\theta}{2} ~\text{ and } ~ \alpha_0:=1+p_0/2=\frac{7}{4}\vee \left(2-\frac{\theta}{4}\right).  
\]
Let 
\[
\alpha\in [\alpha_0,2) ~\text{ and }~ s\in [\alpha, 2]. 
\]
\begin{proposition}[Growth estimates of $u$ and $A_s u$]\label{prop:poisson}
    Assume $b$ satisfies \eqref{eq:dispt}, $f\in L^\infty (\mR^d)$ with $\|f\|_{L^\infty}\leq 1$. Then 
    \begin{enumerate}[(a)]
        \item There exists a solution $u$ to \eqref{eq:poisson} such that 
        \begin{equation}\label{eq:u_bd}
            |u(x)|\leq C (1+|x|^{p_0}), \quad x\in \mR^d 
        \end{equation}
        where $C$ only depends on $\Theta$. 
        \item If $b$ also satisfies \eqref{eq:growth}, then for each $s$, $u\in \cX_{s,\eps}$ and $A_s u$ given by \eqref{eq:def_As} satisfies 
        \begin{equation}
            \big| A_s u(x) \big| \leq C  \exp\left(\frac{\kappa}{20d\theta} |x|^\theta\right), \quad x\in \mR^d, 
        \end{equation}
    where $C$ only depends on $\Theta$. 
    \end{enumerate}
\end{proposition}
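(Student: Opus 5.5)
For part (a), I would use the Lyapunov function $V_p$ from \eqref{eq:Vp} with $p=p_0$. Since $\alpha\ge\alpha_0=1+p_0/2$, we have $p_0<\alpha$ (because $p_0\le 2$ gives $p_0<1+p_0/2$), and $p_0>(2-\theta)\vee0$. So \eqref{eq:LaV} applies and gives $L_\alpha V_{p_0}\le -c|x|^{p_0+\theta-2}+K$. The key observation is that $p_0+\theta-2\ge 0$ is forced by $p_0\ge (4-\theta)/2$, and indeed $p_0+\theta-2\ge\theta/2>0$. This makes the right-hand side a genuine drift towards a compact set. I would then build $u$ probabilistically as
\[
u(x)=-\int_0^\infty \bigl(P^\alpha_t f(x)-\langle f,\pi_\alpha\rangle\bigr)\,\d t .
\]
To control this integral I would invoke a polynomial/subgeometric ergodicity bound for $X^\alpha$ (Douc--Fort--Guillin type, or \cite[Section 3.4.4]{Kul2018ergodic}). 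Applied with the drift condition $L_\alpha V\le -\phi(V)+K\mathbf 1_{B_R}$, where $\phi(v)\approx v^{1-(2-\theta)/p_0}$, it yields $\int_0^\infty\|P_t^\alpha(x,\cdot)-\pi_\alpha\|_{\rm TV}\,\d t\le C V_{p_0}(x)$. That gives \eqref{eq:u_bd}.

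As an alternative, I would use the stopping-time representation via the return time to a ball $B_R$, combined with Dynkin's formula. This gives $\mathbb E_x\tau_{B_R}\le C V_{p_0}(x)$ whenever $p_0+\theta-2\ge0$. Uniform ergodicity on $B_R$ (from local irreducibility of the stable noise) then controls the rest. I would check that the constants depend only on $\Theta$: the drift is only used through \eqref{eq:dispt}, and through local boundedness in the minorization on $B_R$.

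For part (b), first $u\in\cX_{s,\eps}$. The growth bound holds because $p_0\le s-\eps$: indeed $p_0\le 2(\alpha-1)\le\alpha\le s$, and the inequality is strict after shrinking $\eps$ when $p_0<2$. Local regularity $C^{\beta+\eps}_{\rm loc}$, with $\beta+\eps=s-2/3+\eps<\alpha+1$, I would obtain from Schauder-type estimates for $(-\Delta)^{\alpha/2}u=-b\cdot\nabla u+f-\langle f,\pi_\alpha\rangle$ on $B_2(x)$. I would bootstrap starting from $C^{\alpha-\eps}$ and use that $b,f$ are smooth, as assumed in the section; the estimates are qualitative here only.

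The quantitative bound on $A_su$ is the main obstacle, since $b$ is merely bounded locally. I would split $A_su(x)$, via \eqref{eq:def_As}, into the region $|y|\ge1$ and the region $|y|<1$. On $|y|\ge1$, the bound \eqref{eq:Ks} together with \eqref{eq:u_bd} gives at most $C(1+|x|^{p_0})$. On $|y|<1$ I need $[u]_{\beta+\eps;B_1(x)}$, where $\beta+\eps<\alpha+1-\alpha\cdot0$ but potentially exceeds $\alpha$. For this I would apply interior nonlocal Schauder estimates with only a bounded drift (Silvestre, or Caffarelli--Silvestre with $\alpha>1$ subcritical). These give $\|u\|_{C^{\alpha+\gamma}(B_1(x))}$ in terms of $\|b\|_{L^\infty(B_2(x))}$ raised to some power, times $\sup_{B_2(x)}|u|$ plus the weighted tail $\int|u(z)|(1+|z-x|)^{-d-\alpha}\,\d z$. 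The power of $\|b\|_{L^\infty(B_2(x))}$ comes from rescaling to absorb the drift, and since $\alpha\ge7/4$ the exponent is something like $(\alpha+\gamma)/(\alpha-1)\le 4+$; I expect this is precisely where the factor $100d\theta$ in \eqref{eq:growth} is spent. To reach order $\beta+\eps>\alpha$ I would iterate, differentiating the equation once, which needs $\nabla u$ and hence another power of $|b|$. Combined with \eqref{eq:growth}, these powers of $\|b\|_{L^\infty(B_2(x))}$ are bounded by $C\exp(\tfrac{\kappa}{20d\theta}|x|^\theta)$ provided the total exponent is at most $5$; the polynomial factors are absorbed into the exponential. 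This Schauder step with explicit dependence on $\|b\|_\infty$ is the heart of the matter.
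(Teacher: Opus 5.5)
Your part (a) is essentially the paper's route: the drift inequality \eqref{eq:LaV} for $V_{p_0}$, petiteness of compact sets, and a Glynn--Meyn type bound $|u|\le C(1+V_{p_0})$. Your treatment of the far part $|y|\ge1$ of $A_su$ and of the growth condition in $\cX_{s,\eps}$ is also fine.

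The gap is in the near-diagonal estimate of (b). You assert that $\beta+\eps$ ``potentially exceeds $\alpha$''. You plan to reach that order by interior Schauder estimates with a merely bounded drift yielding $C^{\alpha+\gamma}$, followed by differentiating the equation. Neither step holds with constants depending only on $\Theta$. When $b$ is only locally bounded and $f$ only bounded, the right-hand side $f-\langle f,\pi_\alpha\rangle-b\cdot\nabla u$ is just $L^\infty$. Schauder theory then gives at best $C^\gamma$ for $\gamma<\alpha$ (the Zygmund class of order $\alpha$). Reaching $C^{\alpha+\gamma}$ requires H\"older norms of $b$ and $f$, and differentiating the equation requires $\nabla b$ and $\nabla f$. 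Dependence on the regularity of $f$ is fatal: the proof of Theorem~\ref{thm:main} needs the bound uniformly over all $f$ with $\|f\|_{L^\infty}\le 1$. This is why the paper stresses that its estimates never use the smoothness of $b$ and $f$.

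The missing observation is that no regularity above $\alpha$ is ever needed. Indeed $\beta=s-\frac23\le\frac43$ while $\alpha\ge\alpha_0\ge\frac74$, so $\beta+\eps<\frac32<\alpha$. Pushing the order below $\alpha$ is exactly what the order-$\frac23$ operator $T$ was built for. A single interior estimate with bounded forcing therefore suffices. Take $v=u(x_0+\cdot)$ on $B_2$, put $b\cdot\nabla v$ into the bounded right-hand side, and absorb it by interpolation. Using \eqref{eq:u_bd} for the local sup and the tail term, Lemma~\ref{lem:schauder} gives, uniformly in $\alpha\in[\alpha_0,2)$,
\[
\|u\|_{C^{3/2}(B_1(x_0))}\le C\bigl(1+\|b\|_{L^\infty(B_2(x_0))}\bigr)^{7/3}\bigl(1+|x_0|^{p_0}\bigr).
\]
Since $\frac73\cdot\frac1{100}<\frac1{30}$, \eqref{eq:growth} leaves room to absorb both the shift $|x_0|\mapsto|x_0|+2$ and the polynomial factor. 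This yields the bound $C\exp\bigl(\frac{\kappa}{20d\theta}|x_0|^\theta\bigr)$. The near part of $A_su(x)$ is then bounded by $C[u]_{3/2;B_1(x)}\int_{B_1}|z|^{\frac32-d-\beta}(1+|\log|z||)\,\d z$. This integral is finite because $\frac32-\beta\ge\frac16$. Your budget ``total exponent at most $5$'' would in any case have to be strict for this absorption to work.
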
 

\begin{proof}
    (a) By L\'evy system, it is easy to see that each compact set is petite for the process $X_t^{\alpha}$ associated with $L_\alpha$. Noting \eqref{eq:LaV}, thanks to \cite[Theorem 3.2]{GM1996liapounov}, there exists a solution $u$ to \eqref{eq:poisson} such that 
    \[
    |u(x)| \leq C (1+|x|^{p_0}), \quad x\in \mR^d, 
    \]
    where $C$ only depends on $\Theta$. 

\medskip\noindent
    (b) Put $v(x):=u(x_0+x)$. $v$ satisfies equation 
    \[
    -(-\Delta)^{\frac{\alpha}{2}} v(x) + b(x_0+x)\cdot \nabla v(x) = f(x_0+x)-\langle f, \pi_\alpha\rangle, \quad x\in B_2. 
    \]
    Our assumption \eqref{eq:growth} and  \eqref{eq:u_bd} impliy 
    \[
    \left[ \|b(x_0+\cdot)\|_{L^\infty(B_2)}\right]^{\frac{7}{3}}  \leq C \exp\left( \frac{\kappa}{30d\theta} |x_0|^\theta \right). 
    \]
    and 
    \[
    \|v\|_{L^\infty(B_2)} \leq C (1+|x_0|^{p_0}), \quad \int_{B_2^c} \frac{v(y)}{|y|^{d+\alpha}} \d y \leq C \int_{B_2^c} \frac{1+|x_0+y|^{p_0}}{|y|^{d+\alpha}} \d y \leq C (1 + |x_0|^{p_0}).  
    \]
    Using above two facts and Lemma \ref{lem:schauder} below, one can see that for any $\alpha\in [\alpha_0, 2)$, 
    \begin{equation}\label{eq:u32}
        \|u\|_{C^{\frac{3}{2}}(B_1(x_0))}=\|v\|_{C^{\frac{3}{2}}(B_1)} \leq C \exp\left(\frac{\kappa}{20d\theta} |x_0|^\theta \right), \quad x_0 \in \mR^d, 
    \end{equation}
    where $C$ only depends on $\Theta$. Noting that $p_0<s$ and $s-2/3\leq 4/3<3/2$, we have $u\in \cX_{s,\eps}$ for some $\eps>0$.
    
    Thanks to the estimates for $K_{A_s}$ from Proposition \ref{prop:A}, \eqref{eq:u_bd} and \eqref{eq:u32}, we obtain  
    \begin{equation*}
        \begin{aligned}
            \big| A_s u(x) \big| \leq& C \|u\|_{C^{\frac{3}{2}}(B_1(x))} \int_{B_1} |z|^{\frac{1}{6}-d} \log(1+1/|z|) \d z \\
            & + C \int_{B_1^c} \frac{|u(x)|+|u(x+z)|}{|z|^{d+s}} \log(1+|z|)\d z \\
            \leq& C \|u\|_{C^{\frac{3}{2}}(B_1(x))} + C \int_{B_1^c} \frac{1+|x|^{p_0} + |x+z|^{p_0}}{|z|^{d+\alpha_0}} \log(1+|z|) \d z \\ 
            \leq & C \exp\left(\frac{\kappa}{20d\theta} |x|^\theta\right). 
        \end{aligned}
    \end{equation*}
    Here we used the facts that $2\geq s \geq \alpha_0 = 1+p_0/2>p_0$ and $s-2/3\leq 4/3=3/2-1/6$. 
\end{proof}

In the proof of Proposition \ref{prop:poisson}, we utilized local regularity estimate for $v$ that are implied by Lemma \ref{lem:schauder}. To prove the latter, we require the following standard iteration lemma, which can be found in \cite{GT2001elliptic}.

\begin{lemma}\label{lem:ite}
    Let $\varphi$ be a non-negative function on $[0,1]$ such that for each $0\leq s<t\leq 1$, $\varphi$ satisfies 
    \[
    \varphi(s)\leq \theta \varphi(t) + \frac{A}{(t-s)^\beta}+B, 
    \]
    where $\theta\in (0,1)$ and $A,B, \beta \geq 0$. Then 
    \[
    \varphi(s)\leq C \left[ \frac{A}{(t-s)^\beta}+B \right], \quad 0\leq s<t\leq 1, 
    \]
    where $C$ only depends on $\theta$ and $\beta$. 
\end{lemma}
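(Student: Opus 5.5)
The plan is to iterate the hypothesis along a geometric sequence of radii between $s$ and $t$, so that the coefficient $\theta$ multiplying $\varphi$ decays geometrically while the $A$-terms accumulate as a convergent series. Fix $0\le s<t\le 1$ and a parameter $\tau\in(0,1)$, to be chosen depending only on $\theta$ and $\beta$. Define
\[
t_0=s,\qquad t_{i+1}=t_i+(1-\tau)\tau^i(t-s),
\]
so that $t_i\uparrow t$ and $t_{i+1}-t_i=(1-\tau)\tau^i(t-s)$.

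Applying the hypothesis to each pair $t_i<t_{i+1}$ gives
\[
\varphi(t_i)\le \theta\varphi(t_{i+1})+\frac{A}{(1-\tau)^\beta\tau^{i\beta}(t-s)^\beta}+B .
\]
Iterating this $k$ times yields
\[
\varphi(s)\le \theta^k\varphi(t_k)+\sum_{i=0}^{k-1}\theta^i\Big[\frac{A\,\tau^{-i\beta}}{(1-\tau)^\beta(t-s)^\beta}+B\Big].
\]
Now choose $\tau$ with $\theta\tau^{-\beta}<1$; for example $\tau^\beta=(1+\theta)/2$ works when $\beta>0$, and if $\beta=0$ any $\tau$ will do. With this choice the series $\sum_i(\theta\tau^{-\beta})^i$ converges, and so does $\sum_i\theta^i$. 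The resulting constant is
\[
C=\frac{(1-\tau)^{-\beta}}{1-\theta\tau^{-\beta}}+\frac{1}{1-\theta},
\]
which depends only on $\theta$ and $\beta$.

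It remains to show that $\theta^k\varphi(t_k)\to0$ as $k\to\infty$. This requires $\varphi$ to be bounded on $[0,1]$. That assumption is implicit in the statement, and it is standard in \cite{GT2001elliptic}; in the application to Lemma \ref{lem:schauder}, $\varphi$ will be a Hölder seminorm of a smooth function on nested balls, which is finite. Granting $\varphi\le\sup\varphi<\infty$, we get $\theta^k\varphi(t_k)\le\theta^k\sup\varphi\to0$, and letting $k\to\infty$ gives the claimed bound.

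The only delicate point is this boundedness requirement. The rest consists of choosing $\tau$ so that the ratio $\theta\tau^{-\beta}$ is less than one, and then summing two geometric series.
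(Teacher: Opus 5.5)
Your argument is the standard geometric-iteration proof of this lemma; the paper gives no proof and simply cites \cite{GT2001elliptic}. Your choice of $\tau$ with $\theta\tau^{-\beta}<1$ and the two geometric sums are right, but the last step assumes something it does not need: boundedness of $\varphi$ is not in the statement and is unnecessary, since applying the hypothesis once more to the pair $t_k<t$, where $t-t_k=\tau^k(t-s)$, gives $\theta^k\varphi(t_k)\le\theta^{k+1}\varphi(t)+(\theta\tau^{-\beta})^k A(t-s)^{-\beta}+\theta^k B\to0$ using only that $\varphi(t)<\infty$, so your proof then covers the lemma exactly as stated.
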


\begin{lemma}\label{lem:schauder}
    Let $\alpha\in [\alpha_0,2)$. Suppose $u$ satisfies 
    \[
    L_\alpha u = g ~\text{ in } B_2, 
    \]
    where $g\in L^\infty(B_2)$. Then 
    \begin{equation}\label{eq:inter1}
        \begin{aligned}
        \|u\|_{C^\alpha(B_{1})} \leq& C (2-\alpha)^{-1} \|g\|_{L^\infty(B_2)} \\
        &+ C (2 - \alpha)^{-\frac{\alpha}{\alpha - 1}} \left( 1+\|b\|_{L^\infty(B_2)}\right)^{\frac{\alpha}{\alpha-1}}  \|u\|_{L^\infty(B_2)}+C \int_{B_2^c} \frac{|u(y)|}{|y|^{d+\alpha}} \d y
        \end{aligned}
    \end{equation}
    and 
    \begin{equation}\label{eq:inter2}
        \begin{aligned}
        \|u\|_{C^{\frac{3}{2}}(B_{1})} \leq& C \|g\|_{L^\infty(B_2)} + C \left[ 1+\|b\|_{L^\infty(B_2)}\right]^{\frac{7}{3}}  \|u\|_{L^\infty(B_2)}+C \int_{B_2^c} \frac{|u(y)|}{|y|^{d+\alpha}} \d y, 
        \end{aligned}
    \end{equation}
    where the constant $C$ does not depend on $\alpha$. 
\end{lemma}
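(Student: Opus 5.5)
We prove Lemma~\ref{lem:schauder} by a freezing/perturbation argument. The drift term is treated as a right-hand side, so the only tool needed is a uniform-in-$\alpha$ Schauder estimate for the fractional Laplacian, closed by interpolation and Lemma~\ref{lem:ite}.

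Fix $\tfrac12\le r<R\le 1$ (after rescaling $B_1\subset B_2$) and a cutoff $\chi\in C_c^\infty(B_R)$ with $\chi\equiv1$ on $B_r$ and $|\nabla^k\chi|\lesssim (R-r)^{-k}$. Set $w=\chi u$. On $B_r$,
\[
(-\Delta)^{\alpha/2}w = b\cdot\nabla u - g + (-\Delta)^{\alpha/2}\big((\chi-1)u\big) =: F .
\]
The last term is a nonlocal tail. Since $(\chi-1)u$ vanishes on $B_r$, it is bounded by $C(R-r)^{-d-\alpha}\big(\|u\|_{L^\infty(B_2)}+\int_{B_2^c}|u(y)||y|^{-d-\alpha}\,\d y\big)$, and its H\"older seminorms on a smaller ball are bounded in the same way. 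The constant $c_d(\alpha)\sim (2-\alpha)$ in this kernel is bounded uniformly, so no bad dependence on $\alpha$ enters here.

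The first ingredient is the global estimate for the fractional Laplacian. For $\alpha\in[\alpha_0,2)$ and $\gamma\ge0$ with $\alpha+\gamma\notin\mN$,
\[
[w]_{C^{\alpha+\gamma}}\le C[(-\Delta)^{\alpha/2}w]_{C^\gamma}+C\|w\|_{L^\infty}.
\]
The key point is the constant. By Littlewood--Paley, the $L^\infty\to C^\alpha$ bound for the Riesz potential of order $\alpha$ carries a factor $(2-\alpha)^{-1}$, coming from summing $2^{j(\alpha-2)}$-type contributions near $\alpha=2$ where $C^2$ fails. The $C^{1/2-\gamma'}$-type estimates avoid integer order and have constants uniform in $\alpha$.

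For \eqref{eq:inter1} we take $\gamma=0$ and bound $\|F\|_{L^\infty}\le \|b\|_{L^\infty}\|\nabla u\|_{L^\infty(B_R)}+\|g\|_{L^\infty}+\text{tail}$. We then interpolate
\[
\|\nabla u\|_{L^\infty(B_R)}\le \eta[u]_{C^\alpha(B_R)}+C\eta^{-1/(\alpha-1)}\|u\|_{L^\infty},
\]
choosing $\eta\sim (2-\alpha)(1+\|b\|)^{-1}$ so that the $C^\alpha$ term absorbs with coefficient $\tfrac12$. This produces exactly the factor $\big((2-\alpha)^{-1}(1+\|b\|)\big)^{\alpha/(\alpha-1)}$. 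Applying Lemma~\ref{lem:ite} to $\varphi(r)=[u]_{C^\alpha(B_r)}$ then gives \eqref{eq:inter1}.

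For \eqref{eq:inter2} we avoid the $(2-\alpha)^{-1}$ loss by targeting $C^{3/2}<C^{\alpha}$, since $\alpha\ge 7/4$. We use the uniform estimate $[w]_{C^{3/2}}\le C\|F\|_{C^{3/2-\alpha}}$ when $3/2-\alpha<0$, that is, a negative-order Besov norm bounded by $\|F\|_{L^\infty}$ with constant uniform for $\alpha\in[7/4,2)$. Equivalently, we use the $L^\infty\to C^{3/2}$ bound, which has constant $\lesssim 1/(\alpha-3/2)\le 4$. Then
\[
\|F\|_{L^\infty}\le \|b\|\,\|\nabla u\|_{L^\infty(B_R)}+\dots, \qquad
\|\nabla u\|_{L^\infty}\le\eta[u]_{C^{3/2}}+C\eta^{-2}\|u\|_{L^\infty}.
\]
With $\eta\sim(1+\|b\|)^{-1}$ this gives the power $(1+\|b\|)^{3}$. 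To reach the stated exponent $7/3$, we interpolate $\nabla u$ instead between $\|u\|_{L^\infty}$ and $[u]_{C^{\alpha}}$ for $\alpha$ close to $7/4$. Alternatively, we first obtain $C^{\alpha}$-type control and then interpolate: with exponent $\alpha/(\alpha-1)\le 7/3$ at $\alpha=7/4$, this is precisely where $7/3$ comes from. Concretely, we run the $C^{\alpha'}$ estimate with a fixed $\alpha'=\min(\alpha,15/8)$-type exponent whose constant is uniform, and then embed into $C^{3/2}$. Lemma~\ref{lem:ite} closes the argument as before.

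The main obstacle is tracking the $\alpha$-dependence of the constants. Specifically, we must show that the Schauder constant for $(-\Delta)^{\alpha/2}$ blows up only like $(2-\alpha)^{-1}$ at the top order $C^\alpha$ and stays bounded at the lower order $C^{3/2}$. We would verify this first, via the dyadic kernel bounds of the Riesz potential $|\xi|^{-\alpha}$ on annuli, in the same spirit as Lemma~\ref{lem:dyadic}.
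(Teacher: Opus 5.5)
Your overall architecture matches the paper's. You treat $b\cdot\nabla u$ as a right-hand side and use a whole-space Schauder estimate for $(-\Delta)^{\alpha/2}$ whose constant is $(2-\alpha)^{-1}$ at order $\alpha$ and uniform at a fixed lower order. You absorb $\|\nabla u\|_{L^\infty}$ by interpolation with parameter $\sim(2-\alpha)(1+\|b\|)^{-1}$, use $c_d(\alpha)\sim 2-\alpha$ to cancel the $(2-\alpha)^{-1}$ in front of the tail, and close with Lemma~\ref{lem:ite}.

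\textbf{The gap is in the localization.} You compute $(-\Delta)^{\alpha/2}w$ only on $B_r$, where your cutoff is $\equiv 1$. You then insert $\|F\|_{L^\infty}$ into a \emph{global} estimate $[w]_{C^\alpha}\le C(2-\alpha)^{-1}\|(-\Delta)^{\alpha/2}w\|_{L^\infty(\mR^d)}+C\|w\|_{L^\infty}$, which needs a bound on all of $\mR^d$. On the transition region $\{0<\chi<1\}$, the term $(-\Delta)^{\alpha/2}((\chi-1)u)$ is not a tail. There $(\chi-1)u$ does not vanish, so evaluating this term requires exactly the local regularity of $u$ you are trying to prove. (Even at points of $B_r$ near $\partial B_r$, the bound $C(R-r)^{-d-\alpha}$ does not follow from ``vanishes on $B_r$'' alone; you need the smoothness of $\chi$.)

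What is missing is the commutator. In the paper's identity \eqref{eq:u_loc}, there are two cutoffs with $\chi\equiv 1$ on $\mathrm{supp}\,\eta$. The function $\eta u$ then satisfies an equation valid on all of $\mR^d$, and it contains the term $[(-\Delta)^{\alpha/2},\eta](u\chi)$. By \eqref{eq:cmut1}, this term is bounded, uniformly in $\alpha$, by $C(R-r)^{-\alpha}\|u\|_{L^\infty(B_R)}+C(R-r)^{1-\alpha}\|\nabla u\|_{L^\infty(B_R)}$. The uniformity comes from $c_d(\alpha)\int_0^{\delta}\rho^{1-\alpha}\,\d\rho\sim\delta^{2-\alpha}$.

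This term is not cosmetic. After the Schauder step it contributes $C(2-\alpha)^{-1}(R-r)^{1-\alpha}\|\nabla u\|_{L^\infty(B_R)}$, which must be absorbed by the same interpolation as the drift. That absorption adds $C(2-\alpha)^{-\alpha/(\alpha-1)}(R-r)^{-\alpha}\|u\|_{L^\infty}$ to the constant $A$ of Lemma~\ref{lem:ite}. Within this method, that is the source of the $(2-\alpha)^{-\alpha/(\alpha-1)}\|u\|_{L^\infty}$ contribution in \eqref{eq:inter1} that survives even when $b\equiv 0$.

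Once you add this commutator estimate, your argument closes. You could instead replace the global estimate by an interior Schauder estimate that only needs $\|F\|_{L^\infty(B_r)}$. However, proving such an estimate with the required $\alpha$-dependence involves the same commutator computation.

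Your treatment of \eqref{eq:inter2} is sound. Run the argument at a fixed order $\alpha'=\min(\alpha,15/8)$ (or simply $7/4$), where the Littlewood--Paley sum has an $\alpha$-uniform constant, and then embed into $C^{3/2}$. This genuinely yields the exponent $7/3$. Working directly at order $3/2$, as the paper's one-line sketch does, literally gives only $(1+\|b\|_{L^\infty})^{3}$.
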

\begin{proof}
    
    Let $R\in (0,1]$ and $\sigma\in (0,1)$. Let $\eta$ and $\chi$ be two cutoff functions satisfying $\eta|_{B_{\sigma R}}\equiv1$, $\eta(x)=0$ for $x\notin B_{\frac{1+\sigma}{2}R}$  and $|\nabla^k\eta|\leq C(1-\sigma)^{-k}R^{-k}$;  $\chi|_{B_{\frac{3+\sigma}{4}R}}\equiv1$, $\chi(x)=0$ for $x\notin B_{\frac{7+\sigma}{8}R}$ and $|\nabla^k\chi|\leq C(1-\sigma)^{-k}R^{-k}$. 
    
    Set $v=u\eta$. Then 
    \begin{equation}\label{eq:u_loc}
    \begin{aligned}
        -(-\Delta)^{\frac{\alpha}{2}}v=&g\eta -b\cdot \nabla u \eta - [(-\Delta)^{\frac{\alpha}{2}},\eta]u \chi + \eta(-\Delta)^{\frac{\alpha}{2}} ((1-\chi)u)=:F. 
    \end{aligned}
    \end{equation}
    Let $\{\Delta_j\}_{j\in \mZ}$ be the standard Littlewood-Paley decomposition. Then $- (-\Delta)^{\frac{\alpha}{2}} \Delta_j v = \Delta_j F$. By Schauder estimates for fractional Laplacian in the whole space (see for instrance \cite{LZ2022nonlocal}), we have 
    \begin{equation}\label{eq:schauder}
        \begin{aligned}
            [u]_{\alpha; B_{\sigma R}}\leq & [v]_{\alpha} \leq \frac{C}{2-\alpha}  \sup_{j\in \mZ} 2^{\alpha j}\|\Delta_j v\|_\infty \leq  \frac{C}{2-\alpha} \|F\|_{L^\infty}\\
            \leq & \frac{C}{2-\alpha} \|g\|_{L^\infty(B_R)} + \frac{C}{2-\alpha} \|b\|_{L^\infty(B_1)} \|\nabla u\|_{L^\infty(B_R)}\\
            & + \frac{C}{2-\alpha} \left\| [(-\Delta)^{\frac{\alpha}{2}},\eta]u\chi \right\|_{L^\infty} + \frac{C}{2-\alpha} \left\|\eta(-\Delta)^{\frac{\alpha}{2}} ((1-\chi) u) \right\|_{L^\infty}. 
        \end{aligned}
    \end{equation}
    
    We claim that 
    \begin{equation}\label{eq:cmut1}
        \begin{aligned}
            I:= & \left\|[(-\Delta)^{\frac{\alpha}{2}},\eta]u\chi\right\|_{L^\infty}\\
            \leq & C (1-\sigma)^{-\alpha} R^{-\alpha} \|u\|_{L^\infty(B_R)} + C (1-\sigma)^{1-\alpha} R^{1-\alpha} \|\nabla u\|_{L^\infty(B_R)}.  
        \end{aligned}
    \end{equation}
    \begin{equation}\label{eq:cmut2}
        \begin{aligned}
            J:=& \left\|\eta(-\Delta)^{\frac{\alpha}{2}} ((1-\chi)u) \right\|_{L^\infty}\\
            \leq&  C (2-\alpha) \left[ (1-\sigma)^{-\alpha} R^{-\alpha} \|u\|_{L^\infty(B_2)} + \int_{B_2^c} \frac{|u(y)|}{|y|^{d+\alpha}} \d y \right].
        \end{aligned}
    \end{equation}
    Combining \eqref{eq:schauder}-\eqref{eq:cmut2}, we have 
    \begin{equation*}
        \begin{aligned}
            [u]_{\alpha; B_{\sigma R}} \leq& C (2-\alpha)^{-1} \|g\|_{L^\infty(B_R)}\\
            &+ C (2-\alpha)^{-1} \left( \|b\|_{L^\infty(B_1)}+(1-\sigma)^{1-\alpha} R^{1-\alpha} \right) \|\nabla u\|_{L^\infty(B_R)}\\
            & + C (2-\alpha)^{-1} (1-\sigma)^{-\alpha} R^{-\alpha} \|u\|_{L^\infty(B_R)} \\
            &+ C (1-\sigma)^{-\alpha} R^{-\alpha} \|u\|_{L^\infty(B_2)}+ C \int_{B_2^c} \frac{|u(y)|}{|y|^{d+\alpha}} \d y.
        \end{aligned}
    \end{equation*}
    By interpolation inequalities, we have
    \begin{align*}
        &C (2-\alpha)^{-1} \|b\|_{L^\infty(B_1)} \|\nabla u\|_{L^\infty(B_R)}\\
        \leq & \frac{1}{4} [u]_{\alpha; B_R} + C (2 - \alpha)^{-\frac{\alpha}{\alpha - 1}} \|b\|_{L^\infty(B_1)}^{\frac{\alpha}{\alpha-1}}  \|u\|_{L^\infty(B_R)} 
    \end{align*}
    and 
    \begin{align*}
        &C (2-\alpha)^{-1} (1-\sigma)^{1-\alpha} R^{1-\alpha} \|\nabla u\|_{L^\infty(B_R)}\\
        \leq & \frac{1}{4} [u]_{\alpha; B_R} + C (2 - \alpha)^{-\frac{\alpha}{\alpha - 1}}  (1-\sigma)^{-\alpha} R^{-\alpha} \|u\|_{L^\infty(B_R)}, 
    \end{align*}
    we obtain 
    \begin{align*}
        [u]_{\alpha; B_{\sigma R}} \leq& \frac{1}{2} [u]_{\alpha; B_R} + \frac{C}{(1-\sigma)^{\alpha} R^{\alpha}}\left[ (2 - \alpha)^{-\frac{\alpha}{\alpha - 1}}   \|u\|_{L^\infty(B_R)} + \|u\|_{L^\infty(B_2)}\right] \\
        &+C \left[ (2-\alpha)^{-1} \|g\|_{L^\infty(B_R)}+   (2 - \alpha)^{-\frac{\alpha}{\alpha - 1}} \|b\|_{L^\infty(B_1)}^{\frac{\alpha}{\alpha-1}}  \|u\|_{L^\infty(B_R)} + \int_{B_2^c} \frac{|u(y)|}{|y|^{d+\alpha}} \d y \right].  
    \end{align*}
    Set $t=R$ and $s=\sigma R$; $\varphi(t)=[u]_{\alpha;B_t}$, 
    \[
    A= C \left[ (2 - \alpha)^{-\frac{\alpha}{\alpha - 1}}  \|u\|_{L^\infty(B_1)} +  \|u\|_{L^\infty(B_2)} \right], 
    \]
    and 
    \[ 
    B=C \left[ (2-\alpha)^{-1} \|g\|_{L^\infty(B_1)}+   (2 - \alpha)^{-\frac{\alpha}{\alpha - 1}} \|b\|_{L^\infty(B_1)}^{\frac{\alpha}{\alpha-1}}  \|u\|_{L^\infty(B_1)} + \int_{B_2^c} \frac{|u(y)|}{|y|^{d+\alpha}} \d y \right].
    \] 
    By Lemma \ref{lem:ite}, one sees 
    \begin{equation*}
    \begin{aligned}
        \|u\|_{C^\alpha(B_{1/2})} \leq& C (2-\alpha)^{-1} \|g\|_{L^\infty(B_1)} + C (2 - \alpha)^{-\frac{\alpha}{\alpha - 1}} \left( 1+ \|b\|_{L^\infty(B_1)} \right)^{\frac{\alpha}{\alpha-1}}  \|u\|_{L^\infty(B_1)}\\
        &+C \|u\|_{L^\infty(B_2)}+C\int_{B_2^c} \frac{|u(y)|}{|y|^{d+\alpha}} \d y.
    \end{aligned}
    \end{equation*}
    Then by standard scaling and covering arguments, we can obtain the desired estimate \eqref{eq:inter1}. 
    
    So our task now is to show \eqref{eq:cmut1} and \eqref{eq:cmut2}. For the claim \eqref{eq:cmut1}, we have 
    \begin{equation*}
        \begin{aligned}
            I \leq& C (2-\alpha) \int_{\mR^d} \underbrace{\frac{|[u\chi(x+z)-u\chi(x)]\cdot [\eta(x+z)-\eta(x)]|}{|z|^{d+\alpha}}}_{=:\Gamma} \d z\\
            \leq& C (2-\alpha) \int_{|z|<\frac{(1-\sigma)R}{16}} \Gamma + C (2-\alpha) \int_{|z|\geq \frac{(1-\sigma)R}{16}} \Gamma =: I_{in}+I_{out}. \\ 
        \end{aligned}
    \end{equation*}
    Since 
    \begin{equation*}
        \begin{aligned}
            I_{in} \leq& C (2-\alpha) \int_0^{\frac{(1-\sigma)R}{16}} r^{1-\alpha} \d r \cdot (1-\sigma)^{-1}R^{-1} \\
            & \qquad \qquad \qquad \cdot \left[ (1-\sigma)^{-1}R^{-1} \|u\|_{L^\infty(B_R)}+ \|\nabla u\|_{L^\infty(B_R)}\right]\\
            \leq& C (1-\sigma)^{-\alpha} R^{-\alpha} \|u\|_{L^\infty(B_R)} + C (1-\sigma)^{1-\alpha} R^{1-\alpha} \|\nabla u\|_{L^\infty(B_R)}. 
        \end{aligned}
    \end{equation*}
    and 
    \begin{equation*}
        \begin{aligned}
            I_{out} \leq& C (2-\alpha) \int_{\frac{(1-\sigma)R}{16}}^{\infty} r^{-1-\alpha} \d r  ~ \|u\|_{L^\infty(B_R)}\leq C (2-\alpha) (1-\sigma)^{-\alpha} R^{-\alpha} \|u\|_{L^\infty(B_R)}. 
        \end{aligned}
    \end{equation*}
    So we have shown \eqref{eq:cmut1}. 
    
    For \eqref{eq:cmut2}. By definition, we have 
    \begin{equation*}
        \begin{aligned}
            J\leq& C (2-\alpha) \sup_{x\in B_{\frac{1+\sigma}{2}R}} \int_{\mR^d} \frac{|u(1-\chi)(y)|}{|x-y|^{d+\alpha}} \d y\\
            \leq& C (2-\alpha) \sup_{x\in B_{\frac{1+\sigma}{2}R}} \int_{|y|>\frac{3+\sigma}{4}R} \frac{|u(y)|}{|x-y|^{d+\alpha}} \d y\\
            \leq& C (2-\alpha) \left[ (1-\sigma)^{-\alpha} R^{-\alpha} \|u\|_{L^\infty(B_2)} + \int_{B_2^c} \frac{|u(y)|}{|y|^{d+\alpha}} \d y \right]. 
        \end{aligned}
    \end{equation*}
    Therefore, we have shown \eqref{eq:cmut2} and completed the proof for \eqref{eq:inter1}.

    Regarding \eqref{eq:inter2}, we again apply \eqref{eq:u_loc} along with the Schauder estimates for the fractional Laplacian in the whole space. Noting that $\alpha \geq \alpha_0 \geq \frac{7}{4} > \frac{3}{2}$, we obtain
    \[
        [u]_{\frac{3}{2}; B_{\sigma R}} \leq \|v\|_{C^{\frac{3}{2}}} \leq C \|F\|_{L^\infty}, 
    \]
    where $C$ is a constant independent of $\alpha$. Following the same arguments as above, we arrive at \eqref{eq:inter2}.
\end{proof}

\section{Proof of Theorem \ref{thm:main}}\label{sec:proof}

We are now in a position to prove our main result.

\begin{proof}[Proof of Theorem \ref{thm:main}]
    For any $f\in C^\infty_c$ with $\|f\|_{L^\infty}\leq 1$, let $u$ be the solution to PDE \eqref{eq:poisson} given by Proposition \ref{prop:poisson}, $T$ and $A_s$ be the operators defined in Section \ref{sec:T_A}. Since we assume $b, f\in C^\infty$, one can see that $\pi\in \sS(\mR^d)$ (see for instance \cite{BKRS2015fokker}). 
    
    We need to verify \eqref{eq:key} rigorously. By \eqref{eq:poisson} and the fact that $T^{-1}$ is a symmetric multiplier operator, we have 
    \begin{equation*}
    \begin{aligned}
        \langle f, \pi-\pi_\alpha \rangle=&\langle f-\langle f, \pi_\alpha \rangle, \pi \rangle = \langle  L_\alpha u, T^{-1} T \pi\rangle- \underbrace{\langle  u, (L^* T^{-1}) T \pi \rangle}_{=0}\\
        =& \langle T^{-1}(L_\alpha-L)u, T\pi \rangle = \langle [-\Delta-(-\Delta)^{\frac{\alpha}{2}}]T^{-1} u, T \pi \rangle\\
        =& \left\langle \int_{\alpha}^{2} \log(\sqrt{-\Delta})  (-\Delta)^{\frac{s}{2}} T^{-1} u  \, \d s,  T \pi \right\rangle. 
    \end{aligned}
    \end{equation*}
    In the pairing $\langle \cdot, \cdot \rangle$ above, the left-hand entry should be understood as a distribution, while the right-hand entry is to be understood as a test function. Taking $g_n\in C_c^\infty$ such that $g_n (\leq T\pi) \to T\pi$ in $\sS(\mR^d)$. By Lemma \ref{lem:dual}, we have
    \begin{align*}
        \langle \log(\sqrt{-\Delta})  (-\Delta)^{\frac{s}{2}} T^{-1} u, g_n \rangle = \langle u, A_s g_n \rangle=\langle A_s u, g_n \rangle. 
    \end{align*}
    Thanks to Propositions \ref{prop:IM}, Proposition \ref{prop:poisson} and Fatou's lemma, we obtain that  
    \begin{equation*}
    \begin{aligned}
        \langle f, \pi-\pi_\alpha \rangle \leq & \liminf_{n\to\infty} \int_\alpha^2 \int_{\mR^d}|A_s u(x)| |g_n(x)| \, \d x \, \d s \\
        \leq & C (2-\alpha) \int_{\mR^d} \exp\left(-\frac{\kappa  |x|^\theta}{20d\theta} \right) \d x\leq C(2-\alpha), 
    \end{aligned}
    \end{equation*}
    where the constant $C$ only depends on $\Theta$. Therefore, 
    \[
    \|\pi_\alpha-\pi\|_{\mathrm{TV}} =\sup_{\|f\|_{L^\infty} \leq 1} \langle f, \pi_\alpha-\pi \rangle= \sup_{f\in C^\infty_c; \|f\|_{L^\infty} \leq 1} \langle f, \pi_\alpha-\pi \rangle \leq C (2-\alpha). 
    \] 
    So we complete the proof for our main result. 
\end{proof}

\bigskip

\paragraph{\bf Acknowledgments.} We thank Xianming Liu (Huazhong University of Science and Technology) for his insightful discussions.

\bibliographystyle{alpha}
\bibliography{mybib.bib}

\end{document}